\newcommand{\ra}{\rightarrow}		\newcommand{\lra}{\longrightarrow}
\newcommand{\by}[1]{\stackrel{#1}{\ra}}
\newcommand{\lby}[1]{\stackrel{#1}{\lra}}
\newcommand{\surj}{\ra\!\!\!\ra}	\newcommand{\inj}{\hookrightarrow}
\newcommand{\ol}{\overline}		\newcommand{\wt}{\widetilde}
\newcommand{\iso}{\by \sim}
\newtheorem{theorem}{Theorem}[section]
\newtheorem{proposition}[theorem]{Proposition}
\newtheorem{lemma}[theorem]{Lemma}
\newtheorem{corollary}[theorem]{Corollary}
\newcommand{\ga}{\alpha}	\newcommand{\gb}{\beta}
		\newcommand{\gd}{\delta}
\newcommand{\gj}{\blacksquare}	\newcommand{\gl}{\lambda}
\newcommand{\gt}{\theta}
	\newcommand{\gD}{\Delta}
\newcommand{\ot}{\mbox{\,$\otimes$\,}}	\newcommand{\op}{\mbox{$\oplus$}}
\newcommand{\Spec}{\mbox{\rm Spec\,}}
\newcommand{\Um}{\mbox{\rm Um}}		\newcommand{\SL}{\mbox{\rm SL}}
\begin{document}

\begin{center}
{\bf \Large Some results on Euler class groups }
\vspace{.2in} \\
	{\large Manoj  K. Keshari} 
\vspace{.1in}\\
{\small 
Department of Mathematics, IIT Mumbai, Mumbai - 400076, India;\;
	keshari@math.iitb.ac.in} 
\

\end{center}

{\small
\noindent {\bf Abstract:} Let $A$ be a regular domain of dimension $d$
containing an infinite field and let $n$ be an integer with $2n\geq
d+3$. For a stably free $A$-module $P$ of rank $n$, we prove that
$(i)$ $P$ has a unimodular element if and only if the euler class of
$P$ is zero in $E^n(A)$ and $(ii)$ we define
Whitney class homomorphism $w(P):E^s(A)\ra E^{n+s}(A)$, where $E^s(A)$
denotes the $s$th Euler class group of $A$ for $s\geq 1$.
\vspace{.1in}

\noindent {\bf Mathematics Subject Classification (2000):}
Primary 13C10.
\vspace{.1in}

\noindent {\bf Key words:} Euler class group, Whitney class homomorphism.}

\section{Introduction}

Let $A$ be a commutative Noetherian ring of dimension $d$. For any
$1\leq s \leq d$, abelian group called the Euler class group
$E^s(A)$ of $A$ is defined in \cite{SY} and given any projective $A$-module
$P$ of rank $n<d$, a Whitney class homomorphism $w(P):E^{d-n} \ra
E^d(A)$ is defined. Further it is proved that if $P$ has a unimodular
element, then $w(P)$ is the zero map.
Assume that $A$ is a regular domain of dimension $d$ containing an infinite
field $k$. For a positive integer $n$ with $2n \geq d+3$, 
%Let $E^n(A)$ be the $n$th Euler class group of $A$ as defined in
%(\cite{Bh-Raja-5}, \cite{SY}) (also denoted by $E^n(A;A)$). 
we prove the following results:

$(i)$ For a stably free $A$-module $P$ of rank $n$, we will associate an
element $e(P)$ of $E^n(A)$ and prove that $e(P)=0$ in $E^n(A)$ if and
only if $P$ splits off a free summand of rank one (i.e. $P=Q\op A$ for
some projective $A$-module $Q$ of rank $n-1$). When $P\op
A\iso A^{n+1}$, this result is due to Bhatwadekar and Raja Sridharan
\cite{Bh-Raja-5}.

$(ii)$ An element $(J,w_J)$ is zero in $E^n(A[T])$ if and only if $J$
is generated by $n$ elements and $w_J$ is a global orientation of
$J$. This result is also proved by Das and Raja (\cite{DR}, Theorem
3.1), but their proof is different from ours.  When $2n>d+3$, this
result follows from \cite{Bh-Raja-5} for any Noetherian ring
$A$. Hence the regularity of the ring is used only in the case
$2n=d+3$.

$(iii)$ Given a stably free $A$-module $Q$ of rank $n$, we define a
Whitney class homomorphism $w(Q):E^s(A) \ra E^{n+s}(A)$. Further, we
prove that if $Q$ has a unimodular element, then $w(Q)$ is the zero
map. When $n+s=d$, these results are proved in \cite{SY} for arbitrary
projective module $Q$ over any Noetherian ring $A$.  

It will be ideal to define the Whitney class homomorphism for all
projective $A$-module $Q$ of rank $n$. For this first we need to define the
euler class of $Q$ in $E^n(A)$ which is not known.

%$$$$$$$$$$$$$$$$$$$$$$$$$$$$$$$$$$$$$$$$$$$$$$$$$$$$$

\section{Euler class groups}
All the rings considered are commutative Noetherian and all the
modules are finitely generated.  For a ring $A$ of dimension $d\geq 2$
and $1\leq n\leq d$, the $n$th Euler class group of $A$, denoted by
$E^n(A)$ is defined in \cite{SY} as follows:

Let $E_n(A)$ denote the group generated by $n\times n$ elementary
matrices over $A$ and let $F=A^n$. 
A local orientation is a pair $(I,w)$, where $I$ is an ideal of $A$ of
height $n$ and $w$ is an equivalence class of surjective homomorphisms
from $F/IF$ to $I/I^2$. The equivalence is defined by $E_n(A/I)$-maps.

Let $L^n(A)$ denote the set of all pairs $(I,w)$, where $I$ is an
ideal of height $n$ such that $\Spec (A/I)$ is connected and $w:F/IF
\surj I/I^2$ is a local orientation.  Similarly, let $L_0^n(A)$
denote the set of all ideals $I$ of height $n$ such that $\Spec
(A/I)$ is connected and there is a surjective homomorphism from $F/IF$
to $I/I^2$.

Let $G^n(A)$ denote the free abelian group generated by $L^n(A)$ and
let $G_0^n(A)$ denote the free abelian group generated by $L_0^n(A)$.

Suppose $I$ is an ideal of height $n$ and $w:F/IF \surj I/I^2$ is a
local orientation. By (\cite{Bh-Raja-5}, Lemma 4.1), there is a unique
decomposition $I=\cap_1^r I_i$, such that $I_i$'s are pairwise
comaximal ideals of height $n$ and $\Spec (A/I_i)$ is connected.  Then
$w$ naturally induces local orientations $w_i:F/I_iF \surj I_i/I_i^2$.
Denote $(I,w)=\sum (I_i,w_i) \in G^n(A)$. Similarly we denote
$(I)=\sum (I_i) \in G_0^n(A)$.

We say a local orientation $w:F/IF\surj I/I^2$ is global if $w$ can be
lifted to a surjection $\Omega : F\surj I$.
Let $H^n(A)$ be the subgroup of $G^n(A)$ generated by global
orientations. Also let $H_0^n(A)$ be the subgroup of $G_0^n(A)$
generated by $(I)$ such that $I$ is a surjective image of $F.$

The Euler class group of codimension $n$ cycles is defined as
$E^n(A)=G^n(A)/H^n(A)$ and the weak Euler class group of codimension
$n$ cycles is defined as $E_0^n(A)=G_0^n(A)/H_0^n(A)$.

The following result is proved in (\cite{Bh-Raja-5}, Corollary 2.4) in
the case $P$ is free. Same proof works in this case, hence we
omit the proof.

\begin{lemma}\label{moving}
Let $A$ be a  ring of dimension $d$ and let $n$ be an integer such that
$2n \geq d+1$. Let $I$ be an ideal of $A$ of 
height $n$. Let $P$ be  a projective $A$-module of rank $n$.
Suppose 
$\phi : P \surj I/I^2$ be a surjection. Then, we can find a lift
$\Phi' :P \ra I$ of $\phi$ such that
$\Phi'(P)= I\cap I'$, where $I'$ is an ideal of height $\geq n$ and
comaximal with $I$.

Further, given any ideal $K$ of $A$ of height $\geq d-n+1$, we can
choose $I'$ to be comaximal with $K$.
\end{lemma}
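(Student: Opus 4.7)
My plan is to mimic the proof of the free case \cite{Bh-Raja-5}, Corollary~2.4, by exploiting the local freeness of $P$: every step in the free argument is either formal or can be carried out on a local basis of $P$ at the finitely many primes that need handling.

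Step 1 (initial lift and decomposition). Pick any lift $\Phi_0 : P \to I$ of $\phi$. Since $\phi$ is surjective, $\Phi_0(P) + I^2 = I$. Apply the determinant trick to the finitely generated $A$-module $I/\Phi_0(P)$, which satisfies $I\cdot(I/\Phi_0(P)) = I/\Phi_0(P)$, to produce $a \in I$ with $(1-a)I \subset \Phi_0(P)$. Setting $I'_0 = \Phi_0(P) : I$, one has $1-a \in I'_0$, hence $I + I'_0 = A$, and a short computation using this comaximality gives $\Phi_0(P) = I \cap I'_0$.

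Step 2 (perturbation). List the finitely many primes $\mathfrak{p}_1, \ldots, \mathfrak{p}_r$ of $A$ which are either minimal over $I'_0$ with $\hh(\mathfrak{p}_j) < n$, or minimal over $I'_0 + K$; none of them contains $I$ (by comaximality of $I$ and $I'_0$). Modify $\Phi_0$ to a new lift
\[
\Phi' \;=\; \Phi_0 + \epsilon,
\]
where $\epsilon \in \Hom(P, I^2)$ is chosen so that $\Phi'(P)_{\mathfrak{p}_j} = A_{\mathfrak{p}_j}$ at every $\mathfrak{p}_j$ and so that no new primes of height less than $n$ appear in the companion ideal. Since $\epsilon(P) \subset I^2$, the map $\Phi'$ is again a lift of $\phi$. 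Applying Step~1 to $\Phi'$ produces $\Phi'(P) = I \cap I'$ with $I + I' = A$; the construction of $\epsilon$ ensures that no $\mathfrak{p}_j$ contains the new $I'$, so that $\hh(I') \geq n$ and $I' + K = A$, as required.

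The main obstacle is the construction of the perturbation $\epsilon$: it must correct $\Phi_0$ at each prescribed bad prime without introducing new primes of height less than $n$ in the companion ideal. This is precisely where the hypothesis $2n \geq d+1$ enters, through a basic-element / Plumstead-type argument applied to the module $\Hom(P, I^2)$, of rank $n$, together with prime avoidance. Since $P$ is locally free of rank $n$ at each $\mathfrak{p}_j$, one chooses a local trivialization there and reduces to the coordinate-wise perturbation argument performed for $P = A^n$ in \cite{Bh-Raja-5}, Corollary~2.4; the rest of the argument transports verbatim.
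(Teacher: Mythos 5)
Your Step 1 is fine: any lift $\Phi_0$ of $\phi$ satisfies $\Phi_0(P)+I^2=I$, and the determinant-trick argument correctly yields $\Phi_0(P)=I\cap I'_0$ with $I'_0=(\Phi_0(P):I)$ comaximal with $I$. But Step 2, which is the entire content of the lemma, is asserted rather than proved. Your plan is to list the finitely many minimal primes of $I'_0$ of height $<n$ and of $I'_0+K$, and to choose a single perturbation $\epsilon\in\Hom(P,I^2)$ so that $\Phi'=\Phi_0+\epsilon$ avoids them ``without introducing new primes of height less than $n$ in the companion ideal.'' That quoted clause is exactly what has to be proved, and it does not follow from avoidance over a precomputed finite list: the companion ideal $(\Phi'(P):I)$ of the perturbed map bears no relation to $I'_0$, so its minimal primes need not be among your $\mathfrak p_j$, and the set of primes that must ultimately be dodged (all primes of height $\leq n-1$ not containing $I$, together with $V(K)\setminus V(I)$) is infinite. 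The standard mechanism is either the generator-by-generator general position induction (which needs a global basis, i.e. $P$ free), or, for projective $P$, the Eisenbud--Evans/Plumstead basic element theorem, in the form: for $(\alpha,a)\in P^*\op A$ there exists $\beta\in P^*$ with $\hh\bigl((\alpha+a\beta)(P)\bigr)_a\geq n$; one applies it with $a=e\in I^2$ chosen so that $(1-e)I\subset\Phi_0(P)$, sets $\Phi'=\Phi_0+e\beta$, and uses a generalized dimension function to handle the $K$-condition simultaneously. Your sentence invoking ``a basic-element / Plumstead-type argument applied to $\Hom(P,I^2)$'' gestures at this but states no theorem and verifies nothing; moreover the proposed reduction to the free case by trivializing $P$ at the finitely many $\mathfrak p_j$ does not work, since the coordinate-wise argument for $A^n$ modifies a global system of generators one at a time, and local trivializations at finitely many primes supply no such global data. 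The correct substitute is precisely the basic element theorem for projective modules, which is the nontrivial imported input --- and is the reason the paper can say that the free-case proof of \cite{Bh-Raja-5}, Corollary 2.4 goes through unchanged for projective $P$.

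A smaller inaccuracy: the hypothesis $2n\geq d+1$ does not enter where you claim it does. What the $K$-clause actually uses is $\hh K\geq d-n+1$, i.e. $\dim A/K\leq n-1$, so that after $n$ general-position choices no prime of $V(K)$ outside $V(I)$ can contain the image; the bound $2n\geq d+1$ is carried along from the context in which the lemma is applied rather than being the engine of this particular argument.
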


Using (\ref{moving}), (\cite{Bh-Manoj}, 4.11, 5.7) and following the proof of
(\cite{Bh-Raja-5}, Proposition 3.3), we can prove the following
result. Hence we omit the proof.

\begin{proposition}\label{ll}
Let $A$ be a regular domain of dimension $d$ containing an infinite
field and let $n$ be an integer such that $2n\geq d+3$. Let $P=Q\op A$
be a projective $A$-module of rank $n$. Let $J,J_1,J_2$ be ideals of
$A[T]$ of height $n$ such that $J$ is comaximal with $J_1$ and
$J_2$. Assume that there exist surjections $$\ga : P[T] \surj J\cap
J_1,\; \gb: P[T] \surj J\cap J_2$$ with $\ga \ot A[T]/J=\gb\ot
A[T]/J$. Suppose that there exists an ideal $J_3\subset A[T]$ of
height $n$ such that $J_3$ is comaximal with $J,J_1,J_2$ and there
exists a surjection $\gamma:P[T] \surj J_3\cap J_1$ with $\ga \ot
A[T]/J_1=\gamma\ot A[T]/J_1$.

Then there exists a surjection $\delta : P[T] \surj J_3\cap J_2$
with $\delta \ot A[T]/J_3=\gamma\ot A[T]/J_3$ and $\delta \ot
A[T]/J_2=\gb \ot A[T]/J_2$.
\end{proposition}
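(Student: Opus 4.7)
The plan is to adapt the proof of \cite{Bh-Raja-5}, Proposition 3.3, from the free case $P = A^n$ to the projective case $P = Q \op A$ of rank $n$. The two ingredients that substitute for the free-module tools in that proof are Lemma \ref{moving} (which now handles projective $P$ with a free summand in the range $2n \geq d+1$) and the addition/subtraction principles of \cite{Bh-Manoj}, 4.11 and 5.7 for such modules when $2n \geq d+3$. The overall strategy is a patching-and-subtraction argument in three steps.

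First, I would exploit the compatibility $\ga \ot A[T]/J_1 = \gm \ot A[T]/J_1$ together with the comaximality of $J_3$ with both $J$ and $J_1$. Viewing $\ga : P[T] \surj J \cap J_1$ and $\gm : P[T] \surj J_3 \cap J_1$ as data on three pairwise-comaximal pieces (after splitting $J_1$ off as the shared factor), the Chinese Remainder Theorem glues them into a single surjection $\theta : P[T] \surj J \cap J_1 \cap J_3$ that restricts to $\ga$ modulo $J \cap J_1$ and to $\gm$ modulo $J_3$. Applying Lemma \ref{moving} to $\theta$, with the auxiliary ideal $K$ chosen to contain the product $J \cdot J_2 \cdot J_3$, replaces the $J_1$ component by a new ideal $J_1'$ of height $\geq n$, comaximal with all of $J, J_1, J_2, J_3$, while preserving the relevant local orientations.

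Second, I would perform the dual patching on the $\gb$-side. Using the remaining compatibility $\ga \ot A[T]/J = \gb \ot A[T]/J$ and the result of Step 1 (which prescribes the local orientation at $J$ and at $J_3$), a second CRT patching followed by another application of Lemma \ref{moving} produces a surjection of $P[T]$ onto an intersection of the form $J \cap J_2 \cap J_3 \cap J_1'$ whose restriction modulo $J_2$ equals $\gb \ot A[T]/J_2$ and whose restriction modulo $J_3$ equals $\gm \ot A[T]/J_3$. At this point the surjection is already ``correct'' on the two ideals we care about; we only need to shed the extraneous components $J$ and $J_1'$. Since both are comaximal with $J_3 \cap J_2$ and carry global orientations (by construction through $\ga, \gb, \gm$), the subtraction principle \cite{Bh-Manoj}, 5.7 cancels them in turn and yields the desired $\gd : P[T] \surj J_3 \cap J_2$ satisfying both prescribed restriction conditions.

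The main obstacle, compared with the proof in \cite{Bh-Raja-5}, is that $P$ is not free: the elementary-matrix adjustments of surjections used to align orientations in \emph{op.\ cit.} are no longer available. Every such adjustment must be replaced by a transvection-and-lifting argument on $P[T]$, and it is precisely the hypotheses $P = Q \op A$, $2n \geq d+3$, and the regularity of $A$ over an infinite field that activate the addition and subtraction principles of \cite{Bh-Manoj} in this range. The remaining work---which the excerpt suppresses---is the careful bookkeeping that the local orientations at $J, J_2, J_3$ are actually preserved (and not merely preserved up to a modifiable equivalence) across each patching and moving step, so that $\gd \ot A[T]/J_3 = \gm \ot A[T]/J_3$ and $\gd \ot A[T]/J_2 = \gb \ot A[T]/J_2$ hold on the nose.
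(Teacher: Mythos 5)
Your overall strategy is the one the paper itself intends: the proof is omitted there precisely because it is obtained by following \cite{Bh-Raja-5}, Proposition 3.3, with Lemma \ref{moving} and the addition/subtraction principles of \cite{Bh-Manoj} (4.11, 5.7) standing in for the free-module tools, and that is exactly the toolkit you name. But two of the steps you actually write down would not survive as stated. In Step 1, the Chinese Remainder Theorem does not ``glue'' $\ga$ and $\gm$ into a surjection $\theta : P[T] \surj J\cap J_1\cap J_3$: CRT only yields a local orientation of $J\cap J_1\cap J_3$ (compatibility modulo squares), and lifting that to an honest surjection onto the intersection is precisely what the addition principle is for. Moreover, the addition principle requires surjections onto the two comaximal factors separately, and you have no surjection onto $J_3$ alone --- only $\gm : P[T]\surj J_3\cap J_1$ --- so even invoking \cite{Bh-Manoj} 4.11 here needs a prior reduction; this is where the moving lemma is really used in \cite{Bh-Raja-5} (note also that $J_1$ and $J_2$ are not assumed comaximal, which is one reason residual ideals comaximal with everything must be introduced first).

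The more serious problem is your last step: you assert that $J$ and $J_1'$ ``carry global orientations (by construction through $\ga,\gb,\gm$)'' and can therefore be cancelled by the subtraction principle. This is unjustified. The map $\ga$ exhibits $J\cap J_1$, not $J$, as a surjective image of $P[T]$, and nothing in the hypotheses makes the orientation $\ga \ot A[T]/J$ global --- if it were, the element $(J,w_J)$ would already be trivial and the proposition would lose most of its content. The subtraction principle (\cite{Bh-Manoj}, 5.7) removes an ideal $K$ from a surjection onto $I\cap K$ only when one also has a surjection onto $K$ inducing the same orientation modulo $K$; manufacturing such matching data for the extraneous components, by a carefully ordered chain of additions and subtractions that uses only the given $\ga$, $\gb$, $\gm$ and their agreements modulo $J$ and $J_1$, is exactly the bookkeeping that constitutes the proof of \cite{Bh-Raja-5}, Proposition 3.3, and it is the part your sketch replaces with an assumption.
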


If we replace $A[T]$ be any
Noetherian ring $B$ of dimension $d$ and $P[T]$ by any projective
$B$-module $\wt P=Q\op B$ of rank $n$, then using (\cite{Bh-Manoj},
Theorems 3.7 and 5.6) and following the proof of (\cite{Bh-Raja-5},
Proposition 3.3), we can prove (\ref{ll}) in this case also.

Using (\cite{Bh-Manoj}, 4.11, 5.7), (\ref{ll}) and following the proof
of (\cite{Bh-Raja-5}, Theorem 4.2), we can prove the following
result. This result is also proved in (\cite{DR}, Theorem 3.1).  Note
that regularity of the ring is used only when $2n=d+3$. When $2n >
d+3$, (\ref{zero}) holds for any ring $A$ by (\cite{Bh-Raja-5},
Theorem 4.2).

\begin{theorem}\label{zero}
Let $A$ be a regular ring of dimension $d\geq 3$ containing an
infinite field and let $n$ be an integer such that $2n\geq
d+3$. Assume that the image of $(J,w_J)$ is zero in $E^n(A[T])$, where
$J\subset A[T]$ is an ideal of height $n$ and $w_J:(A[T]/J)^n \surj
J/J^2$ is an equivalence class of surjections. Then $J$ is generated
by $n$ elements and $w_J$ can be lifted to a surjection $\gt: A[T]^n
\surj J$.
\end{theorem}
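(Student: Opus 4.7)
The plan is to mimic the proof of (\cite{Bh-Raja-5}, Theorem 4.2) verbatim, using Lemma \ref{moving} and Proposition \ref{ll} as the moving and subtraction tools for the polynomial ring $A[T]$ in place of their ring-theoretic analogues for a ring. The strategy has two stages: first consolidate the relation $(J,w_J)\in H^n(A[T])$ into a clean two-term form, then apply the subtraction principle.

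Stage one. Since $(J,w_J)=0$ in $E^n(A[T])$, the element $(J,w_J)$ lies in $H^n(A[T])$, so it is a finite $\BZ$-linear combination of global orientations in $G^n(A[T])$. Using Lemma \ref{moving} repeatedly (the hypothesis $2n\geq d+3$ guarantees $n\geq d-n+1$, so the auxiliary ideal produced by the moving lemma can always be chosen comaximal with any previously fixed ideal of height $\geq n$), I would successively replace each ideal appearing in the combination by one comaximal with all other ideals in play, and then amalgamate the positive and negative contributions via a Chinese Remainder argument into single global orientations $(L,\gs_L)$ and $(K,\gw_K)$ respectively, with $J,K,L$ pairwise comaximal, yielding
$$(J,w_J)+(K,\gw_K)=(L,\gs_L)\quad\text{in}\;G^n(A[T]).$$
Because $G^n(A[T])$ is free abelian on pairs with connected spectrum, this relation together with the comaximality of $J, K, L$ forces the equality of ideals $L=J\cap K$, and the lift $\Sigma_L:A[T]^n\surj J\cap K$ of $\gs_L$ to induce $w_J$ modulo $J$ and $\gw_K$ modulo $K$.

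Stage two. I then have $\Sigma_L:A[T]^n\surj J\cap K$ and $\Omega_K:A[T]^n\surj K$ agreeing modulo $K$, and the goal reduces to the subtraction principle: produce a surjection $\gt:A[T]^n\surj J$ lifting $w_J$. I would derive this from Proposition \ref{ll} with $P=A^{n-1}\op A$. Apply Lemma \ref{moving} to $\Omega_K$ to rewrite it as $\gb:A[T]^n\surj K\cap J_2$ with $J_2$ comaximal with $J$ and $K$, and apply Lemma \ref{moving} to a lift of $w_J$ on $J$ to obtain $\gm:A[T]^n\surj J\cap J_3$ with $J_3$ comaximal with $J,K,J_2$. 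Playing the role of the ideals ``$J$'', ``$J_1$'', ``$J_2$'', ``$J_3$'' in Proposition \ref{ll} by $K, J, J_2, J_3$ respectively, with $\ga=\Sigma_L$ and $\gb,\gm$ as above (the compatibility hypotheses are satisfied by construction), the proposition delivers a surjection $\gd:A[T]^n\surj J_3\cap J_2$ whose reductions match the prescribed orientations; one more application of the subtraction (now against $\gb$, which is itself a global orientation) cancels $J_2$ and leaves the desired $\gt:A[T]^n\surj J$ inducing $w_J$.

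The principal obstacle is stage one: honestly consolidating an arbitrary $\BZ$-linear combination in $H^n(A[T])$ into a single pair of global orientations while preserving the relation in $G^n(A[T])$ and the pairwise comaximality of all ideals involved. This is where Lemma \ref{moving} must be invoked with care, and where the regularity hypothesis enters (through \cite{Bh-Manoj}) to supply the polynomial-ring analogue of the standard moving results used in \cite{Bh-Raja-5}. Once the normal form is reached, stage two is a routine orchestration of Proposition \ref{ll}, which encapsulates precisely the new ingredient needed to carry the argument of (\cite{Bh-Raja-5}, Theorem 4.2) over to $A[T]$.
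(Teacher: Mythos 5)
Your proposal follows exactly the route the paper itself takes: the paper omits the proof precisely because it is obtained by running the argument of (\cite{Bh-Raja-5}, Theorem 4.2) with Lemma \ref{moving}, the results of (\cite{Bh-Manoj}, 4.11, 5.7), and Proposition \ref{ll} supplying the moving and subtraction steps over $A[T]$, which is what your two-stage plan (normal form in $G^n(A[T])$, then subtraction via Proposition \ref{ll}) does. So your approach is essentially the same as the paper's, and your sketch correctly identifies where the regularity hypothesis and the polynomial-ring ingredients enter.
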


%%%%%%%%%%%%%%%%%%%%%%%%%%%%%%%%%%%%%%%%%%%%%%%%%%%
\subsection{Euler class of Stably free modules}
 Let $A$ be a regular ring
of dimension $d\geq 3$ containing an infinite field and let $n$ be an integer
such that $2n\geq d+3$. In \cite{Bh-Raja-5}, a map from $\Um_{n+1}(A)$
to $E^n(A)$ is defined and it is proved that, if $P$ is a projective
$A$-module of rank $n$ defined by the unimodular element
$[a_0,\ldots,a_n]$, then $P$ has a unimodular element if and only if
the image of $[a_0,\ldots,a_n]$ in $E^n(A)$ is zero (\cite{Bh-Raja-5},
Theorem 5.4). Note that $P\op A\iso A^{n+1}$.

For $r\geq 1$, let $\Um_{r,n+r}(A)$ be the set of all $r\times (n+r)$
matrices $\sigma$ in $M_{r,n+r}(A)$ which has a right inverse, i.e
there exists $\tau \in M_{n+r,r}$ such that $\sigma \tau$ is the
$r\times r$ identity matrix. For any element $\sigma \in
\Um_{r,n+r}(A)$, we have an exact sequence
$$0\ra A^r \by \sigma A^{n+r} \ra P\ra 0,$$ where $\sigma(v)=v\sigma$
for $v\in A^r$ and $P$ is a stably free projective $A$-module of rank
$n$. Hence, every element of $\Um_{r,n+r}(A)$ corresponds to a stably
free projective $A$-module of rank $n$ and conversely, any stably free
projective $A$-module $P$ of rank $n$ will give rise to an element of
$\Um_{r,n+r}(A)$ for some $r$.  We will define a map from
$\Um_{r,n+r}(A)$ to $E^n(A)$ which is a natural generalization of the
map $\Um_{n+1}(A)\ra E^n(A)$ defined in \cite{Bh-Raja-5}.

Let $\sigma$ be an element of $\Um_{r,n+r}(A)$. 
$$\sigma = \left [
	\begin{array}{ccc}
	{a_{1,1}} & \dots &{a_{1,n+r}} \\
	\vdots     &       & \vdots   \\ 
	a_{r,1} & \ldots &a_{r,n+r}  
	
	\end{array}
\right]$$
Let $e_1,\ldots,e_{n+r}$ be the standard basis of $A^{n+r}$ and let
$$P=A^{n+r}/(\sum_{i=1}^{n+r} a_{1,i}\,e_i,\ldots, \sum_{i=1}^{n+r}
a_{r,i}\,e_i)A.$$ Let $p_1,\ldots,p_{n+r}$ be the images of
$e_1,\ldots,e_{n+r}$ respectively in $P$. Then
$$P=\sum_{i=1}^{n+r} A\,{p_i} \;{\rm with \;relations}\; \sum_{i=1}^{n+r}
a_{1,i}\,p_i=0,\ldots, \sum_{i=1}^{n+r} a_{r,i}\,p_i=0.$$ To the triple
$(P,(p_1,\ldots,p_{n+r}),\sigma)$, we associate an element
$e(P,(p_1,\ldots,p_{n+r}),\sigma)$ of $E^n(A)$ as follows:

Let $\gl:P\surj J$ be a surjection, where $J\subset A$ is an ideal of
height $n$. Since $P\op A^r=A^{n+r}$ and $\dim A/J \leq d-n\leq n-3$,
by \cite{B}, $P/JP$ is a free $A/J$-module of rank
$n$. Since $J/J^2$ is a surjective image of $P/JP$, $J/J^2$ is
generated by $n$ elements.

Let ``bar'' denote reduction modulo $J$. By Bass result
(\cite{B}), there exists $\Theta \in E_{n+r}(A/J)$ such that
$[\ol {a_{1,1}},\ldots, \ol {a_{1,n+r}}]\,\Theta =[1,0,\ldots,0]$. Let
${\ol \sigma}\,\Theta  = 
\left [
	\begin{array}{cccc}
	  1  & 0 &0 & 0 \\
	  \ol {b_{2,1}} &\ol {b_{2,2}}&  \ldots &\ol {b_{2,n+r}}\\
	  \vdots     & \vdots      & \vdots & \vdots  \\ 
	  \ol {b_{r,1}}& \ol {b_{r,2}} & \ldots &b_{r,n+r}  
	\end{array} \right]$.

Further, there exists $\Theta_1\in E_{n+r}(A/J)$ such that ${\ol
\sigma}\,\Theta \,\Theta_1=\left [
	\begin{array}{cccc} 1&0&0&0 \\
	 0&\ol {b_{2,2}}&  \ldots &\ol {b_{2,n+r}}\\
	  \vdots &\vdots      & \vdots & \vdots  \\ 
	  0& \ol {b_{r,2}} & \ldots &b_{r,n+r}  
	\end{array} \right]$.

It is clear that the first row of the elementary matrix
$(\Theta\Theta_1)^{-1}$ is $[\ol {a_{1,1}}, \ldots, \ol {a_{1,n+r}}]$
and the matrix $\ol \sigma_1= \left [
	\begin{array}{ccc}
	 \ol {b_{2,2}}&  \ldots &\ol {b_{2,n+r}}\\
	  \vdots      & \vdots & \vdots  \\ 
	  \ol {b_{r,2}} & \ldots &b_{r,n+r}  
	\end{array} \right]$ belongs to $\Um_{(r-1),(n+r-1)}(A/J)$.
Hence, by induction on $r$, there exists $\Theta_2\in E_{n+r-1}(A/J)$
such that the first $r-1$ rows of $\Theta_2$ are $\ol\sigma_1$. Hence
$\ol \sigma$ can be completed to an elementary matrix $\Delta \in
E_{n+r}(A/J)$ (i.e. $\ol \sigma$ is the first $r$ rows of an
elementary matrix $\Delta \in E_{n+r}(A/J) $).

Since $\sum_{i=1}^{n+r} a_{1,i}\,p_i =0,\ldots ,\sum_{i=1}^{n+r}
a_{r,i}\,p_i=0$, we get
$$\Delta [\ol {p_1},\ldots ,\ol {p_{n+r}}]^t = [0,\ldots,0,\ol
{q_1},\ldots, \ol {q_n}]^t,$$
where $t$ stands for transpose.

Thus $(\ol {q_1},\ldots,\ol {q_n})$ is a basis of the free module
$P/JP$. Let $w_J$ be given by the set of generators $\ol
{\gl(q_1)},\ldots,\ol {\gl(q_n)}$ of $J/J^2$, i.e $w_J:(A/J)^n \surj
J/J^2$ given by $w_J(e_i)=\ol {\gl(q_i)}$ for $i=1,\ldots,n$.

We define $e(P,(p_1,\ldots,p_{n+r}),\sigma)=(J,w_J) \in E^n(A)$. We
need to show that $e(P,(p_1,\ldots,p_{n+r}),\sigma)$ is independent of
the choice of the elementary completion of $\ol \sigma$.

\begin{lemma}
Suppose $\Gamma\in E_{n+r}(A/J)$ is chosen so that its first $r$ rows
are $\ol \sigma$. Let $\Gamma [\ol {p_1},\ldots,\ol
{p_{n+r}}]^t=[0,\ldots,0,\ol {q_1'},\ldots,\ol {q_n'}]^t$. Then there
exists $\Psi \in E_n(A/J)$ such that $\Psi [\ol {q_1},\ldots, \ol
{q_n}]^t=[\ol {q_1'},\ldots,\ol {q_n'}]$.
\end{lemma}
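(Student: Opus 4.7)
The plan is to compare the two elementary completions through the product $\Gamma\Delta^{-1}\in E_{n+r}(A/J)$. Because the first $r$ rows of both $\Gamma$ and $\Delta$ coincide with $\ol\sigma$, and because the first $r$ rows of the identity $\Delta\Delta^{-1}$ equal $\ol\sigma\Delta^{-1}$, one has $\ol\sigma\Delta^{-1}=[I_r\mid 0]$. Hence
$$\Gamma\Delta^{-1} \;=\; \left[\begin{array}{cc} I_r & 0 \\ C & B \end{array}\right]$$
for some $n\times r$ matrix $C$ and some $B\in M_n(A/J)$. Adding appropriate multiples of the first $r$ rows to rows $r+1,\ldots,n+r$ (each an elementary generator of $E_{n+r}(A/J)$) clears $C$, producing some $E\in E_{n+r}(A/J)$ with $E\,\Gamma\Delta^{-1}=\mathrm{diag}(I_r,B)$. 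Thus $\mathrm{diag}(I_r,B)\in E_{n+r}(A/J)$.

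I would then verify that this $B$ is indeed the matrix we seek. The relations defining $P$ give $\ol\sigma[\ol{p_1},\ldots,\ol{p_{n+r}}]^t=0$, so the first $r$ entries of both $\Delta[\ol p]^t$ and $\Gamma[\ol p]^t$ vanish. Applying $\mathrm{diag}(I_r,B)=E\Gamma\Delta^{-1}$ to $\Delta[\ol p]^t=[0,\ldots,0,\ol{q_1},\ldots,\ol{q_n}]^t$ yields $[0,\ldots,0,B[\ol{q_1},\ldots,\ol{q_n}]^t]^t$, which equals $E\Gamma[\ol p]^t=[0,\ldots,0,\ol{q_1'},\ldots,\ol{q_n'}]^t$ (the row operations assembled in $E$ only add multiples of the already-zero top rows to the bottom rows). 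Comparing coordinates gives $B[\ol{q_1},\ldots,\ol{q_n}]^t=[\ol{q_1'},\ldots,\ol{q_n'}]^t$.

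The main obstacle is the final destabilization: extracting $B\in E_n(A/J)$ from $\mathrm{diag}(I_r,B)\in E_{n+r}(A/J)$. This is precisely where the hypothesis $2n\geq d+3$ is indispensable. Since $J$ has height $n$ in $A$ (of dimension $d$), $\dim(A/J)\leq d-n\leq n-3$, so the stable rank of $A/J$ is at most $n-2$. By Bass's $K_1$-stability theorem, the natural map $\GL_m(A/J)/E_m(A/J)\ra \GL_{m+1}(A/J)/E_{m+1}(A/J)$ is bijective for every $m\geq n-1$; iterating from $m=n$ up to $m=n+r-1$ shows that $\GL_n(A/J)/E_n(A/J)\ra \GL_{n+r}(A/J)/E_{n+r}(A/J)$ is injective. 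Since the class of $I_r\oplus B$ vanishes on the right, the class of $B$ vanishes on the left, so $B\in E_n(A/J)$; setting $\Psi=B$ concludes the argument.
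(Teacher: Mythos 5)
Your proof is correct and takes essentially the same route as the paper: both compare the two completions through $\Gamma\Delta^{-1}$, whose first $r$ rows are $[I_r \mid 0]$, extract a matrix $\Psi=B\in \SL_n(A/J)$ with $I_r\oplus B\in E_{n+r}(A/J)$ carrying $[\ol{q_1},\ldots,\ol{q_n}]^t$ to $[\ol{q_1'},\ldots,\ol{q_n'}]^t$, and then destabilize using $\dim(A/J)\leq n-3$. The only difference is that you invoke Bass's $K_1$-stability theorem where the paper cites Vaserstein's Theorem 3.2, which is immaterial.
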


\begin{proof}
The matrix $\Gamma \Delta^{-1} \in E_{n+r}(A/J)$ is such that its
first $r$ rows are $\left [
	\begin{array}{cccccc}
	 1& \ldots & 0 &0 &\ldots &0\\ \vdots &&\vdots &\vdots &
	   & \vdots \\ 0 & \ldots &1 &0 &\ldots & 0
	\end{array} \right].$ 
Therefore, there exists $\Psi\in \SL_n(A/J) \cap E_{n+r}(A/J)$ such
that $\Psi [\ol {q_1},\ldots,\ol {q_n}]^t = [\ol {q_1'},\ldots, \ol
{q_n'}]^t$.
Since $n > \dim A/J +1$, by (\cite{vas}, Theorem 3.2), $\Psi \in E_n(A/J)$.
$\hfill \gj$
\end{proof}

The remaining arguments needed to show that
$e(P,(p_1,\ldots,p_{n+r}),\sigma)$ is a well defined element of
$E^n(A)$ is same as in (\cite{Bh-Raja-5}, p. 152-153), hence we omit
it. Therefore we have a well defined map $\Um_{r,n+r}(A) \by e E^n(A)$.

The following result can be proved by following the proof of
(\cite{Bh-Raja-5}, Theorem 5.4). Hence we omit the proof.

\begin{theorem}\label{stably}
Let $A$ be a regular ring of dimension $d$ containing an infinite
field $k$ and let $n$ be an integer such that $2n \geq d+3$. Let $P$
be a stably free $A$-module of rank $n$ defined by $\sigma \in
\Um_{r,n+r}(A)$. Then $P$ has a unimodular element if and only if
$e(P)=e(\sigma)=0$ in $E^n(A)$.
\end{theorem}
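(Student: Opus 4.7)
Our plan is to adapt the proof of \cite{Bh-Raja-5}, Theorem 5.4, from the case $r = 1$ (defining $P$ via a single unimodular row) to arbitrary $\sigma \in \Um_{r,n+r}(A)$ and the extended definition of $e(P)$ just established. The two implications are handled separately. Throughout, we rely on the moving lemma (Lemma \ref{moving}), the analogue of Theorem \ref{zero} for $A$ in place of $A[T]$ (which is \cite{Bh-Raja-5}, Theorem 4.2, extending to $2n = d + 3$ in our regular setting via \cite{Bh-Manoj}, 4.11 and 5.7), the subtraction principle of Proposition \ref{ll}, and \cite{vas}, Theorem 3.2, which gives $E_n(A/J)$-triviality of completions of unimodular vectors whenever $n > \dim A/J + 1$.

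$(\Rightarrow)$ Suppose $P = Q \op A p_0$ with $p_0$ unimodular; then $Q$ is stably free of rank $n - 1$ and $2(n-1) \geq d + 1$, so Lemma \ref{moving} applied to $Q$ produces a surjection $\phi : Q \surj K$ with $K$ an ideal of height $n - 1$. Pick $b \in A$ generically so that $J := K + bA$ has height $n$, and set $\lambda : P \surj J$ by $\lambda(q + a p_0) := \phi(q) + ab$. Using the splitting, exhibit a basis $\CB = (\bar q_1, \dots, \bar q_{n-1}, \bar p_0)$ of $P/JP$ extending a basis of the (Bass-)free module $Q/JQ$, arranged (via Lemma \ref{moving}) so that $\phi(q_i)$ actually generate $K$; in $\CB$ the local orientation $w_J$ is visibly global, induced by the surjection $A^n \surj J$, $e_i \mapsto \phi(q_i)$ for $i < n$ and $e_n \mapsto b$. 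To compare $\CB$ with the $\sigma$-basis used in defining $e(P)$: the image of $\bar p_0$ in $(A/J)^n$ under the $\sigma$-basis identification is a unimodular vector, which by \cite{vas}, Theorem 3.2 can be brought to $e_n$ by an element of $E_n(A/J)$; an additional elementary step adjusts the first $n - 1$ vectors to lie in $Q/JQ$. Hence the two bases differ by an element of $E_n(A/J)$, so $w_J$ is global also in the $\sigma$-basis, giving $e(P) = (J, w_J) = 0$.

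$(\Leftarrow)$ Suppose $e(P) = 0$. Choose $\lambda : P \surj J$ defining $e(P) = (J, w_J)$; by hypothesis $(J, w_J) = 0$ in $E^n(A)$, so (the analogue of) Theorem \ref{zero} supplies a global surjection $\omega : A^n \surj J$ lifting $w_J$. The surjections $\lambda$ and $\omega$ agree modulo $J^2$ under the identification $P/JP \cong (A/J)^n$ coming from the $\sigma$-basis. Applying Lemma \ref{moving} to both, move to generic variants $\lambda' : P \surj J \cap J_1$ and $\omega' : A^n \surj J \cap J_2$ with $J_1, J_2$ of height $\geq n$, mutually comaximal and both comaximal with $J$. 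The subtraction principle --- Proposition \ref{ll} adapted to $A$ in place of $A[T]$, together with the lifting results \cite{Bh-Manoj}, 4.11 and 5.7 --- then allows one to produce a surjection $P \surj J_2$ whose local orientation on $J_2$ is the reduction of a global orientation, and, fed back into the construction, this yields a splitting $P = Q' \op A$, exhibiting a unimodular element of $P$. This mirrors the endgame of \cite{Bh-Raja-5}, Theorem 5.4, transplanted to our more general setup.

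The main obstacle is the $(\Leftarrow)$ direction: converting the abstract vanishing $(J, w_J) = 0$ in $E^n(A)$ into a concrete splitting of $P$. The simultaneous deployment of the moving lemma, the subtraction principle, and the Bhatwadekar--Manoj lifting theorems is delicate, and the regularity of $A$ (containing an infinite field) is essential precisely in the borderline case $2n = d + 3$, where the general Noetherian-ring methods of \cite{Bh-Raja-5} alone are insufficient.
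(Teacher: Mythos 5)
Your overall plan (adapt the proof of \cite{Bh-Raja-5}, Theorem 5.4, using Lemma \ref{moving}, the $A$-analogue of Theorem \ref{zero}, and the subtraction principle) is the same route the paper takes, since the paper simply omits the proof and refers to \cite{Bh-Raja-5}. But as written your sketch has a genuine gap in the $(\Leftarrow)$ direction. After invoking Theorem 4.2 of \cite{Bh-Raja-5} to lift $w_J$ to $\omega:A^n\surj J$, your endgame is: produce a surjection $P\surj J_2$ whose induced orientation is again the reduction of a global one, and then claim that, ``fed back into the construction, this yields a splitting $P=Q'\op A$.'' That last sentence is precisely the statement to be proved: you started from exactly this situation (a surjection $\lambda:P\surj J$ with $w_J$ global), so replacing $J$ by $J_2$ gains nothing, and no unimodular element (equivalently, no surjection $P\surj A$, i.e.\ no unimodular vector annihilated by the rows of $\sigma$) is ever constructed. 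Moreover, Proposition \ref{ll} (and its ring-version noted after it) requires the source module to be of the form $Q\op A$, with all four surjections $\ga,\gb,\gm,\gd$ from that same split module; you cannot apply it with $P$ itself, since $P$ having a free summand is the conclusion sought. What you would need is a ``mixed'' subtraction principle (surjection $A^n\surj J\cap J_2$ plus a compatible surjection $P\surj J$ yields $P\surj J_2$), which is a different statement, and even granting it the splitting still does not follow by your argument. The actual proof must use the presentation of $P$ by $\sigma$ beyond the definition of $e(P)$: one lifts the elementary completion of $\ol\sigma\in\Um_{r,n+r}(A/J)$ to $E_{n+r}(A)$ so that $\sigma$ is congruent to the trivial block modulo $J$, and then plays the global generators of $J$ against this presentation (via the subtraction principle in the form proved in \cite{Bh-Manoj}) to exhibit a unimodular element; your sketch never touches $\sigma$ after the setup.

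There is also a smaller but real gap in $(\Rightarrow)$. Orientations are only well defined up to $E_n(A/J)$, and $e(P)$ is computed in the basis of $P/JP$ coming from an elementary completion of $\ol\sigma$. You compare this with your splitting-adapted basis $(\ol q_1,\dots,\ol q_{n-1},\ol p_0)$ by moving the image of $\ol p_0$ to $e_n$ via \cite{vas}, but the remaining discrepancy on the complement is an arbitrary matrix in $\GL_{n-1}(A/J)$ (not a priori elementary, nor even of determinant one), and ``an additional elementary step adjusts the first $n-1$ vectors'' does not dispose of it. In \cite{Bh-Raja-5} this comparison is handled by choosing the surjection and the elementary completion compatibly with the splitting; some such argument is needed here too before you may conclude that $w_J$ is global with respect to the $\sigma$-basis.
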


%%%%%%%%%%%%%%%%%%%%%%%%%%%%%%%%55%%%%%%%%%%%%%%%%%%%%%%%%%%%%%

\subsection{Whitney class homomorphism}

Let $A$ be a regular domain of dimension $d\geq 2$ containing an
infinite field $k$ and let $Q$ be a stably free $A$-module of rank $n$
with $2n\geq d+3$. In (\ref{stably}), we proved that $e(Q)=0$ in
$E^n(A)$ if and only if $Q$ has a unimodular element.  Using this
result we will establish a whitney class homomorphism of stably free
modules. When $n+s=d$, then (\ref{W}) is proved in
(\cite{SY}, Theorem 3.1) for any projective $A$-module $Q$. Our proof
is a simple adaptation of their proof.

\begin{theorem}\label{W}
Let $A$ be a regular domain of dimension $d\geq 2$ containing an
infinite field $k$. Suppose $Q$ is a stably free $A$-module of rank
$n$ defined by $\sigma\in \Um_{r,n+r}(A)$. Then there exists a homomorphism
$w(Q):E^s(A)\ra E^{n+s}(A)$ for every integer $s\geq 1$
with $2n+s\geq d+3$.
\end{theorem}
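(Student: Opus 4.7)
The plan is to mimic the construction of the Whitney class homomorphism in \cite{SY}, substituting Theorem \ref{stably} (and its $A[T]$-shadow, Theorem \ref{zero}) wherever \cite{SY} invokes the corresponding top-degree results for arbitrary projective modules.

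\textbf{Construction on generators.} Given $(I,w_I)\in L^s(A)$, I would first use Lemma \ref{moving} together with a standard basic-element argument to produce an ideal $J\subset A$ of height $n$ comaximal with $I$ and a surjection $\gl:Q\surj J$. Since $\dim A/J\leq d-n<n$, Bass's theorem \cite{B} gives that $Q/JQ$ is free of rank $n$, and the elementary-completion procedure from the construction preceding Theorem \ref{stably} attaches a local orientation $w_J:(A/J)^n\surj J/J^2$, well-defined modulo the $E_n(A/J)$-action. Because $I+J=A$, CRT furnishes the decompositions $(A/IJ)^{n+s}\cong (A/I)^{n+s}\op (A/J)^{n+s}$ and $IJ/(IJ)^2\cong I/I^2\op J/J^2$, and I assemble $w_I$ and $w_J$ into a block-diagonal surjection $w_{IJ}:(A/IJ)^{n+s}\surj IJ/(IJ)^2$: the first $s$ coordinates map to $I/I^2$ via $w_I$ and vanish in $J/J^2$, while the last $n$ coordinates map to $J/J^2$ via $w_J$ and vanish in $I/I^2$. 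Set $w(Q)(I,w_I):=(IJ,w_{IJ})\in E^{n+s}(A)$ and extend $\BZ$-linearly to $G^s(A)$.

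\textbf{Well-definedness (the main step).} The inner ambiguity --- independence of the elementary-completion choice in producing $w_J$ --- is handled exactly as in the lemma preceding Theorem \ref{stably}, transported to codimension $n+s$ via the CRT splitting. The outer ambiguity --- independence of the pair $(J,\gl)$ --- is the heart of the argument. Given a second pair $(J',\gl')$, I would construct a surjection $\Lambda:Q[T]\surj \CJ$ over $A[T]$ interpolating $\gl$ and $\gl'$ (using Proposition \ref{ll} and the subtraction-principle techniques of \cite{Bh-Raja-5}), form the corresponding $A[T]$-orientation $(I[T]\CJ,w_{I[T]\CJ})\in E^{n+s}(A[T])$, and apply Theorem \ref{zero} at codimension $n+s$ over $A[T]$. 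The hypothesis $2n+s\geq d+3$ with $s\geq 1$ yields $2(n+s)\geq d+4=(d+1)+3$, which is precisely the dimension inequality needed for Theorem \ref{zero} over $A[T]$.

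\textbf{Descent and vanishing.} To show $w(Q)$ factors through $E^s(A)$, I would verify it sends $H^s(A)$ into $H^{n+s}(A)$: if $w_I$ lifts to a surjection $\Omega_I:A^s\surj I$, then using $e\in I$ with $1-e\in J$ (from $I+J=A$) and a companion $f\in J^2$ with $f\equiv 1\pmod I$, one patches $\Omega_I$ and a lift of $\gl$ along $A^{n+r}\surj Q$ into a surjection $A^{n+s}\surj IJ$ lifting $w_{IJ}$ (the scalar factors $(1-e)^2$ and $f$ kill the unwanted components modulo $(IJ)^2$). Linearity on $G^s(A)$ is built into the definition, so $w(Q):E^s(A)\ra E^{n+s}(A)$ is a homomorphism. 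The principal obstacle is the outer well-definedness step --- producing the interpolation $\Lambda$ over $A[T]$ and checking that the resulting $A[T]$-orientation is the class required for Theorem \ref{zero} to apply --- which is where the regularity of $A$ and the combined force of \cite{Bh-Manoj}, Proposition \ref{ll}, and Theorem \ref{zero} come into play.
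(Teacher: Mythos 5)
There is a genuine gap, and it occurs at the very first step: your cycle is in the wrong codimension. You take a generic surjection $\gl:Q\surj J$ with $J$ of height $n$ \emph{comaximal} with $I$ and then form the product $IJ=I\cap J$. Since $I$ and $J$ are comaximal, the minimal primes of $IJ$ are those of $I$ together with those of $J$, so $\hh (IJ)=\min(s,n)$, never $n+s$. Hence $(IJ,w_{IJ})$ is not a local orientation in codimension $n+s$ and does not define an element of $G^{n+s}(A)$, let alone of $E^{n+s}(A)$; your block-diagonal surjection $(A/IJ)^{n+s}\surj IJ/(IJ)^2$ exists, but the height condition in the definition of $L^{n+s}(A)$ fails. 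Geometrically, the Whitney class is a cap product: one must take a section of $Q$ that is generic \emph{relative to} $V(I)$, so that its zero locus meets $V(I)$ in codimension $n+s$; taking an everywhere-generic section and then multiplying ideals produces the union of a codimension-$s$ and a codimension-$n$ cycle, not their intersection. The paper's construction does exactly the former: it chooses a surjection $\psi:Q/IQ\surj \wt I/I$ with $\wt I\supset I$ of height $\geq n+s$, uses Bass's theorem and the elementary completion of $\ol\sigma$ modulo $\wt I$ to get a canonical (up to $E_n$) free basis of $Q/\wt IQ$, lifts $\wt\psi$ to $\gb':F/\wt IF\ra \wt I/\wt I^2$, pushes $w$ into $(I+\wt I^2)/\wt I^2\subset \wt I/\wt I^2$, and sets $w(Q)\cap(I,w)=(\wt I,\gb'\op w')$. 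Everything downstream in your proposal (the $A[T]$ interpolation, the descent to $E^s(A)$) is built on the ill-formed class, so it cannot be repaired without replacing the construction itself.

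A secondary remark: even granting a correct construction, your well-definedness strategy (interpolating two choices over $A[T]$ and invoking Theorem \ref{zero} in codimension $n+s$) is not what the paper does. The paper proves independence of $\psi$ by a direct subtraction-style argument entirely over $A$: it lifts $w$ to a surjection $\Omega:F'\surj I\cap K$ with $K$ comaximal with $I$, runs the same construction on $K$ to get $(\wt K,\gD')$, exhibits an explicit surjection $\wt\Psi\op\Omega:Q\op F'\surj \wt I\cap\wt K$ inducing $(\gD,\gD')$ up to the isomorphism $\Gamma$, and concludes $(\wt I,\gD)+(\wt K,\gD')=0$ in $E^{n+s}(A)$ from (\cite{Bh-Raja-5}, Theorem 4.2); the same device with $K=A$ gives the factorization through $E^s(A)$. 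Your route through $E^{n+s}(A[T])$ would need substantial additional justification (construction of the interpolating surjection and identification of its class), whereas the paper's argument is self-contained at the level of ideals in $A$.
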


\begin{proof}
Write $F=A^n$ and $F'=A^s$. Let $I$ be an ideal of height $s$ and
$w:F'/IF' \surj I/I^2$ be an equivalence class of surjective
homomorphisms, where the equivalence is defined by $E_s(A/I)=E(F'/IF')$
maps. To each such pair $(I,w)$, we will associate an element
$w(Q)\cap (I,w)\in E^{n+s}(A)$.

First we can find an ideal $\wt I\subset A$ of height $\geq n+s$ and a
surjective homomorphism $\psi:Q/IQ \surj \wt I/I$ (this is just the
existence of a generic surjection of $Q/IQ$). Let $\psi \ot A/\wt
I=\wt \psi$. Then $\wt \psi :Q/\wt IQ \surj \wt I/(I+\wt I^2)$ is a
surjection.

Since $\dim A/\wt I\leq d-(n+s)\leq n-3$, $Q/\wt IQ$ is a free $A/\wt
I$-module, by Bass result (\cite{B}). Let ``bar'' denotes reduction
modulo $\wt I$, then $\ol \sigma \in \Um_{r,n+r}(\ol A)$ can be
completed to an elementary matrix $\Theta \in E_{n+r}(\ol A)$. This
gives a well defined basis $[\ol q_1,\ldots,\ol q_n]$ for $\ol Q$
which does not depends on the elementary completions of $\ol \sigma$
(in the sense that any two basis of $\ol Q$ obtained this way will be
connected by an element of $E_n(\ol A)$).

Let $\gamma:F/\wt IF \iso Q/\wt I Q$ be the isomorphism given by
$\gamma(\ol {e_i})=\ol q_i$ for $i=1,\ldots,n$, where $e_1,\ldots,e_n$ is
the standard basis of the free module $F$. Let $\gb=\wt \psi
\gamma :F/\wt IF \surj \wt I/(I+\wt I^2)$ be a surjection and let
$\gb' :F/\wt IF \ra \wt I/\wt I^2$ be a lift of $\gb$.

Further, $w:F'/IF' \surj I/I^2$ induces a surjection $\wt w:F'/\wt IF'
\surj (I+\wt I^2)/\wt I^2$. Composing $\wt w$ with the natural
inclusion $ (I+\wt I^2)/\wt I^2 \subset \wt I/\wt I^2$, we get
a map $w': F'/\wt IF' \ra \wt I/\wt I^2$.

Combining $w'$ and $\gb'$, it is easy to see that we get a surjective
homomorphism $$\gD=\gb'\op w' : F/\wt IF\op F'/\wt IF' = (F\op F')/\wt I
(F\op F') \surj \wt I/\wt I^2$$ (surjectivity follows by considering
the exact sequence $0\ra (I+\wt {I^2})/\wt {I^2} \inj \wt I/\wt {I^2}
\ra \wt I/(I+\wt {I^2})\ra 0$). We have $(\wt I,\gD)$ a local
orientation of $\wt I$. We will show that the image of $(\wt I,\gD)$
in $E^{n+s}(A)$ is independent of choices of $\psi$, the lift $\gb'$ and
the representative of $w$ in the equivalence class.\\

{\bf Step 1.} First we show that for a fixed $\psi$, $(\wt I,\gD)$ in
$E^{n+s}$ is independent of the lift $\gb'$ and the representative of
$w$.

$(a)$ Suppose $w,w_1 :F'/IF' \surj I/I^2$ are two equivalent local
orientations of $I$. Then $w_1=w\epsilon$ for some $\epsilon \in
E(F'/IF')$. Using the canonical homomorphisms $E(F'/IF')\surj E(F'/\wt
IF') \ra E((F\op F')/\wt I(F\op F'))$, we get that $w_1'=w'\epsilon_1$
for some $\epsilon_1\in E((F\op F')/\wt I(F\op F'))$. 

Let $\gD_1$ be the local orientation of $\wt I$ obtained by using
$\gb'$ and $w_1$. Then $\gD_1=\gD \epsilon_1$. Hence $(\wt
I,\gD)=(\wt I,\gD_1)$ in $E^{n+s}(A)$.

$(b)$ Let $\gb'' :F/\wt IF \ra \wt I/ \wt I^2$ be another lift of
$\gb$. Then $\phi=\gb'-\gb'' :F/\wt IF \ra (I+\wt I^2)/\wt I^2$. Since
$\wt w_1:F'/\wt IF' \surj (I+\wt I^2)/\wt I^2$ is a surjection, there
exists $g:F/\wt IF \ra F'/\wt IF'$ such that $\wt w_1 g=\phi$.

Let $\epsilon_2=\left(
	\begin{smallmatrix}
	1 & 0 \\
	g & 1
	\end{smallmatrix}
\right) \in E((F\op F')/\wt I(F\op F'))$. Then $(\gb'' \op
w_1')\epsilon_2 = (\gb'\op w_1')$. Therefore, if $\gD_2=\gb''\op
w_1'$, then $\gD_2 \epsilon_2 =\gD_1=\gD\epsilon_1$.

This  completes the proof of the claim in step 1.\\

{\bf Step 2.} Now we will show that $(\wt I,\gD)\in E^{n+s}(A)$ is
independent of $\psi$ also (i.e. it depends only on $(I,w)$).

Recall that $w:F'/IF' \surj I/I^2$ is a surjection. It is easy to see
that we can lift $w$ to a surjection $\Omega:F' \surj I\cap K$, where
$K+I=A$ and $K$ is an ideal of height $s$ (or $K=A$).

We can find an ideal $\wt K\subset A$ of height $\geq n+s$ and a
surjective homomorphism $\psi':Q/KQ \surj \wt K/K$. Let $\psi'\ot
A/\wt K =\wt {\psi'}$. Then $\wt {\psi'} :Q/\wt KQ \surj \wt K/(K+\wt
K^2)$ is a surjection.

Again, since $\dim A/\wt K \leq n-3$, $Q/\wt K Q$ is a free $A/\wt
K$-module. If ``bar'' denotes reduction modulo $\wt K$, then $\ol
\sigma \in \Um_{r,n+r}(A/\wt K)$ can be completed to an elementary
matrix which gives a basis $\ol p_1,\ldots,\ol p_n$ for $Q/\wt
KQ$. Let $\gamma':F/\wt KF \iso Q/\wt KQ$ be the isomorphism given by
$\gamma'(\ol {e_i})=\ol p_i$. Let $\eta=\wt {\psi'}\gamma' :F/\wt KF \surj
\wt K/(I+\wt K^2)$ be a surjection and let $\eta':F/\wt KF \ra \wt
K/\wt K^2$ be a lift of $\eta$.

The map $\Omega : F'\surj I\cap K$ induces a surjection $\Omega\ot
A/K=\Omega':F'/KF' \surj K/K^2$ which in turn induces a surjection
$\Omega'\ot A/\wt K={w''} : F'/\wt KF' \surj (K+\wt K^2)/\wt
K^2$. Since $(K+\wt K^2)\subset \wt K$, we get a map $w'' :F'/\wt KF'
\ra \wt K/\wt K^2$.

Combining $w''$ and $\eta'$, we get a surjection $\Delta'=\eta'\op w''
:(F\op F')/\wt K(F\op F') \surj \wt K/\wt K^2$.\\

{\bf Claim.} $(\wt I,\gD)+(\wt K,\gD')=0$ in $E^{n+s}(A)$. \\

Since $I+K=A$, we get $\wt I+\wt K=A$. Further, we get a surjection
$$\Psi=\psi\op \psi' :Q/(I\cap K)Q \simeq Q/IQ\op Q/KQ \surj \wt I/I
\op \wt K/K \simeq (\wt I\cap \wt K)/(I\cap K).$$ Let $\wt \Psi:Q\ra
\wt I\cap \wt K$ be a lift of $\Psi$ such that the following holds:\\

$(i)$ $\wt \Psi \ot A/\wt I= \wt \psi$, where $\wt \psi:Q/\wt IQ \surj
\wt I/(I+\wt I^2)$ is a surjection and

$(ii)$ $\wt \Psi \ot A/\wt K=\wt {\psi'}$, where $\wt {\psi'} : Q/\wt
KQ \surj \wt K/(K+\wt K^2)$ is a surjection.\\

Let $\wt \Psi_1 : Q/\wt I Q \ra \wt I/\wt I^2$ be a lift of $\wt \Psi
\ot A/\wt I$ and let $\wt \Psi_2 :Q/\wt KQ \ra \wt K/\wt K^2$ be a
lift of $\wt \Psi \ot A/\wt K$. Then $\wt \Psi_1$ and $\wt \Psi_2$
induces a map $\wt \Psi_3 : Q/(\wt I\cap \wt K)Q \ra (\wt I\cap \wt
K)/(\wt I\cap \wt K)^2$.

Since $\gb=\wt \psi \gamma = (\wt \Psi \ot A/\wt I)\gamma$ and
$\gb':F/\wt IF \ra \wt I/\wt I^2$ is a lift of $\gb$, we get that
$\ga_1=\gb' \gamma^{-1} - \wt \Psi_1$ is a map from $Q/\wt IQ$ to
$(I+\wt I^2)/\wt I^2 \subset \wt I/\wt I^2$. Similarly, $\ga_2 = \eta'
(\gamma')^{-1} - \wt \Psi_2$ is a map from $Q/\wt KQ$ to $(K+\wt
K^2)/\wt K^2 \subset \wt K/\wt K^2$.

Since $\wt w:F'/\wt IF' \surj (I+\wt I^2)/\wt I^2$ is a surjection, we
can find $g_1 : Q/\wt IQ \ra F'/\wt IF'$ such that $\wt w
g_1=\ga_1$. Similarly, we can find $g_2:Q/\wt KQ \ra F'/\wt KF'$ such
that $w'' g_2=\ga_2$ (here $w''=\Omega'\ot A/\wt K$).

Let $g$ be given by $g_1,g_2$ and $\wt \gamma$ be given by
$\gamma,\gamma'$. Then \\

$(a)$ $\left(
	\begin{smallmatrix}
	\wt \gamma & 0 \\
	0 & 1
	\end{smallmatrix}
\right)$ is an isomorphism from $({F\op F'})/{(\wt I\cap \wt
K)(F\op F')}$ to $({Q\op F'})/{(\wt I\cap \wt K)(Q\op F')}$ and

$(b)$ $\left(
	\begin{smallmatrix}
	1 & 0 \\
	g & 1
	\end{smallmatrix}
\right)$ is an automorphism of $({Q\op F'})/{(\wt I\cap \wt
K)(Q\op F')}.$\\

Write $\Gamma=\left(
	\begin{smallmatrix}
	1 & 0 \\
	g & 1
	\end{smallmatrix}
\right) \left(
	\begin{smallmatrix}
	\wt \gamma & 0 \\
	0 & 1
	\end{smallmatrix}
\right)$.
Since $\wt \Psi$ is a lift of $\Psi$, $\Psi$ is a surjection from
$Q/(I\cap K)Q$ to $(\wt I\cap \wt K)/(I\cap K)$ and $\Omega : F' \surj
I\cap K$ is a surjection, we get that $\wt \Psi \op \Omega : Q\op F'
\surj \wt I\cap \wt K$ is a surjection. 

Write $\Theta =(\wt \Psi \op \Omega) \ot A/(\wt I\cap \wt K)$. Then
$\Theta :({Q\op F'})/{(\wt I\cap \wt K)(Q\op F')} \surj ({\wt I\cap
\wt K})/{(\wt I\cap \wt K)^2}$. Let $(\Delta,\Delta') : ({F\op
F'})/{(\wt I\cap \wt K)(F\op F')} \surj ({\wt I\cap \wt K})/{(\wt
I\cap \wt K)^2}$ be the surjection induced from $\Delta,\Delta'$.  We
claim that $ (\Delta,\Delta')=\Theta \Gamma$. (This follows by
checking on $V(\wt I)$ and $V(\wt K)$ separately, but we give a direct
proof below.)

Let $\ga_3 :Q/(\wt I\cap \wt K)Q \ra (\wt I \cap \wt K)/(\wt I\cap \wt
K)^2$ be the map induced from $\ga_1,\ga_2$ and let $\tau:F/(\wt I\cap
\wt K) \ra (\wt I \cap \wt K)/(\wt I\cap \wt K)^2$ be the map induced
from $\gb',\eta'$. Then we have $\ga_3=\tau \wt\gamma^{-1} -
\wt\Psi_3$. Let $\ol \Omega : F'/(\wt I\cap \wt K)F' \ra (\wt I \cap
\wt K)/(\wt I\cap \wt K)^2$ be the map induced from $\wt w,w''$. Then
we have $\ol \Omega g=\ga_3$.

Now $\Theta
\Gamma (0,y)=\Theta(0,y)=\ol \Omega(y)=(\Delta,\Delta')(0,y)$ and
$\Theta\Gamma (x,0)=\Theta (\wt \gamma(x),g\wt \gamma(x)) =\wt
\Psi_3 \wt \gamma (x) + \ol \Omega g \wt \gamma (x) =\wt \Psi_3\wt
\gamma (x) + \tau \wt \gamma^{-1}\wt\gamma (x)-\wt\Psi_3 \wt\gamma (x)
= \tau(x)=(\gD,\gD')(x,0).  $

This proves that $(\Delta,\Delta')=\Theta \Gamma$.
By (\cite{Bh-Raja-5}, Theorem 4.2), we get that $(\wt I,\gD)+(\wt
K,\gD')=0$ in $E^{n+s}(A)$. Since $(\wt K,\gD')$ depends only on
$(I,w)$, it follows that $(\wt I,\gD)$ is independent of the choice of
$\psi$. This establishes the claim in step 2.\\

If $(I,w)$ is a global orientation, then we can take $K=A$ in the
above proof and it will follow that $(\wt I,\gD)$ is also a global
orientation.

Thus the association $(I,w)\mapsto (\wt I,\gD)\in E^{n+s}(A)$ defines
a homomorphism $\phi(Q) : G^s(A)\ra E^{n+s}(A)$, where $(I,w)$
are the free generators of $G^s(A)$. Further $\phi(Q)$ factors through a
homomorphism $w(Q) : E^s(A)\ra E^{n+s}(A)$ sending
$(I,w)\in E^s(A)$ to $(\wt I,\gD)\in E^{n+s}(A)$.
This completes the proof of the theorem.  $\hfill \gj$
\end{proof}

\begin{corollary}\label{W1}
Let $A$ be a regular domain of dimension $d\geq 2$ containing an
infinite field. Suppose $Q$ is a stably free $A$-module of rank
$n$. Then there exists a homomorphism $w_0(Q):E_0^s(A) \ra
E_0^{n+s}(A)$ for every integer $s\geq 1$ with $2n+s\geq d+3$.
\end{corollary}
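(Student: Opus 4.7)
My plan is to obtain $w_0(Q)$ by composing with $w(Q)$ from Theorem \ref{W} and showing that the composition factors through the canonical forgetful surjection $\pi_s : E^s(A) \surj E_0^s(A)$ sending $(I,w) \mapsto (I)$. First I would note that the analogous forgetful map $\pi_{n+s} : E^{n+s}(A) \to E_0^{n+s}(A)$ is well-defined, because any global orientation on an ideal $\wt I$ lifts to a surjection $F \op F' \surj \wt I$; hence $H^{n+s}(A)$ is automatically sent into $H_0^{n+s}(A)$. The goal is therefore to produce $w_0(Q)$ making the square
$$
\begin{array}{ccc}
E^s(A) & \by{w(Q)} & E^{n+s}(A) \\
\pi_s \downarrow & & \downarrow \pi_{n+s} \\
E_0^s(A) & \by{w_0(Q)} & E_0^{n+s}(A)
\end{array}
$$
commute.

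To establish the factorization, it suffices to check that $\pi_{n+s} \circ w(Q)$ annihilates the generators of $\ker \pi_s$, which are of two types: differences $(I,w_1) - (I,w_2)$ for two local orientations on the same ideal $I$, and classes $(I,w)$ for which $I$ is a surjective image of $F'$ (so that $(I) \in H_0^s(A)$). For the first type, the point is that the ideal $\wt I$ produced by Theorem \ref{W} comes from the choice of a surjection $\psi : Q/IQ \surj \wt I/I$, which depends only on $Q$ and $I$ and not on any orientation of $I$. Using the same $\psi$ for both $w_1$ and $w_2$ therefore yields representatives $(\wt I, \gD_1)$ and $(\wt I, \gD_2)$ with a common underlying ideal $\wt I$, and both map under $\pi_{n+s}$ to the single class $(\wt I) \in E_0^{n+s}(A)$. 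For the second type, I would choose the global orientation $w$ coming from a fixed surjection $F' \surj I$ and invoke the final paragraph of the proof of Theorem \ref{W} (the case $K = A$): the output $(\wt I, \gD)$ is then itself a global orientation, so $\wt I$ is a surjective image of $F \op F'$, placing $(\wt I)$ in $H_0^{n+s}(A)$.

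Combining the two checks, $\pi_{n+s} \circ w(Q)$ descends to the desired homomorphism $w_0(Q) : E_0^s(A) \to E_0^{n+s}(A)$. The main obstacle I anticipate is the first verification above: one must confirm that the choice of $\psi$ in the construction of Theorem \ref{W} can genuinely be made without reference to the orientation, so that different orientations on $I$ produce the same underlying ideal $\wt I$ in the output. Once that independence is explicitly read off from the construction, the remainder is a direct diagram chase built on top of Theorem \ref{W}. $\hfill \gj$
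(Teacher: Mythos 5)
Your proposal is correct, but it is organized differently from the paper's proof. The paper constructs $w_0(Q)$ directly at the weak level: for a generator $(I)$ of $G_0^s(A)$ it chooses a generic surjection $\psi:Q/IQ\surj \wt I/I$, associates $(\wt I)\in E_0^{n+s}(A)$, re-runs (in outline) the $K$-trick from Theorem \ref{W} to get a surjection $F\op F'\surj \wt I\cap \wt K$ and hence the independence of $(\wt I)$ from the choice of $\psi$, and finally checks that global $(I)$ yields global $(\wt I)$ so that the map $\phi_0:G_0^s(A)\ra E_0^{n+s}(A)$ factors through $E_0^s(A)$. You instead descend $\zeta^{n+s}\circ w(Q)$ along the forgetful surjection $\zeta^s:E^s(A)\surj E_0^s(A)$, so the independence of all choices is inherited wholesale from Theorem \ref{W}, and the only new verification is that the output ideal $\wt I$ depends only on $(Q,I,\psi)$ and not on the orientation $w$ --- which is indeed immediate from the construction, since $\psi$ is chosen before $w$ enters (only $\gD$, not $\wt I$, sees $w$). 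Two small remarks: your second family of kernel generators is redundant, since $\ker\zeta^s$ is exactly the image of the differences $(I,w_1)-(I,w_2)$ (a class $(I,w)$ with $I$ a surjective image of $F'$ equals such a difference plus an element of $H^s(A)$, which is already zero in $E^s(A)$, so you do not even need the $K=A$ remark at the end of the proof of Theorem \ref{W}); and your route has the added benefit of giving the compatibility $w_0(Q)\zeta^s=\zeta^{n+s}w(Q)$ for free, which the paper states separately in the next corollary. The paper's direct construction, on the other hand, defines $w_0(Q)$ without reference to the orientation-level group and keeps the weak case self-contained.
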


\begin{proof}
The proof is similar to that of (\ref{W}) and we give an
outline. Write $F=A^n$ and $F'=A^s$.

Suppose $(I)$ is a generator of $G_0^s(A)$. Here $I$ is an ideal of
height $s$, $\Spec (A/I)$ is connected and there is a surjection from
$F'/IF'$ to $I/I^2$. There is a surjection $\psi : Q/IQ \surj \wt
I/I$, where $\wt I$ is an ideal of height $\geq n+s$. For such a
generator $(I)$, we associate $(\wt I)\in E_0^{n+s}(A)$.

For well-definedness, fix a local orientation $w:F'/IF' \surj I/I^2$
and a surjective lift $\Omega:F' \surj I\cap K$ of $w$, where $K$ is
an ideal of height $\geq s$ and $K+I=A$. Let $\psi':Q/KQ \surj \wt
K/K$ be a surjection, where $\wt K$ is an ideal of height $\geq
n+s$. As in (\ref{W}), there exists a surjection from $F\op F' \surj
\wt I\cap \wt K$. This shows that $(\wt I)+(\wt K)=0$ in
$E_0^{n+s}(A)$ and so $(\wt I)\in E_0^{n+s}(A)$ is independent of the
choice of $\psi$.

The association $(I)\mapsto (\wt I)\in E_0^{n+s}(A)$ extends to a
homomorphism $\phi_0 :G_0^s(A) \ra E_0^{n+s}(A)$.

If $(I)$ is global (i.e. $I$ is a surjective homomorphism of $F'$),
then taking $K=A$ in the above argument, we can prove that $(\wt I)$
is also global. So $\phi_0$ factors through a homomorphism
$w_0(Q):E_0^s(A) \ra E_0^{n+s}(A)$.
$\hfill \gj$
\end{proof}

\begin{definition}
The homomorphism $w(Q)$ in theorem \ref{W} will be called the {\it
Whitney class homomorphism}. The image of $(I,w)\in E^s(A)$ under
$w(Q)$ will be denoted by $w(Q)\cap (I,w)$.

Similarly, the homomorphism $w_0(Q)$ in (\ref{W1}) will be called the
{\it weak Whitney class homomorphism}. The image of $(I)\in E_0^s(A)$
under $w_0(Q)$ will be denoted by $w_0(Q) \cap (I)$.
\end{definition}

The proof of the following result is same as (\cite{SY}, Corollary
3.4), hence we omit it.

\begin{corollary}
Let $A$ be a regular domain of dimension $d\geq 2$ containing an
infinite field. Suppose $Q$ is a stably free $A$-module of rank
$n$. For every integer $s\geq 1$ with $2n+s\geq d+3$, we have
$$w_0(Q)\zeta^s=\zeta^{n+s}w(Q)\;\; {\rm and}\;\;
C^n(Q^*)\eta^s=\eta^{n+s}w_0(Q),$$ where $(i)$ $\zeta^r :E^r(A)\surj
E_0^r(A)$ is a natural surjection obtained by forgetting the
orientation,

$(ii)$ $\eta^r:E_0^r(A) \ra CH^r(A)$ is a natural homomorphism,
sending $(I)$ to $[A/I]$. Here $CH^r(A)$ denotes the Chow group of
cycles of codimension $r$ in $\Spec (A)$ and

$(iii)$ $C^n(Q^*)$ denote the top Chern class homomorphism \cite{F}.
\end{corollary}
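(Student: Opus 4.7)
The plan is to verify each of the two identities by unwinding the constructions of $w(Q)$, $w_0(Q)$ and of the natural transformations $\zeta^\bullet$, $\eta^\bullet$ on a generator $(I,w)$ of $G^s(A)$ and using that the ideal $\wt I$ of height $\geq n+s$ appearing in the construction of $w(Q)(I,w)$ and $w_0(Q)(I)$ can be chosen to be the same.

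For the first identity $w_0(Q)\zeta^s=\zeta^{n+s}w(Q)$, I would fix $(I,w)\in E^s(A)$ and observe that both $\phi(Q)$ (from the proof of (\ref{W})) and $\phi_0(Q)$ (from the proof of (\ref{W1})) start with a generic surjection $\psi:Q/IQ\surj \wt I/I$ with $\height(\wt I)\geq n+s$, producing $(\wt I,\gD)$ and $(\wt I)$ respectively. Since $\zeta^{n+s}(\wt I,\gD)=(\wt I)$ is obtained simply by forgetting the orientation $\gD$, and $\zeta^s(I,w)=(I)$ is likewise the forgetful map, applying $w_0(Q)$ and $\zeta^{n+s}w(Q)$ to $(I,w)$ yields the same element $(\wt I)\in E_0^{n+s}(A)$. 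Passing from $G^s(A)$ to $E^s(A)$ and using additivity on the free generators completes this step.

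For the second identity $C^n(Q^*)\eta^s=\eta^{n+s}w_0(Q)$, I would fix a generator $(I)\in G_0^s(A)$ and note that $\eta^{n+s}w_0(Q)(I)=[A/\wt I]\in CH^{n+s}(A)$, where $\wt I$ is obtained from the generic surjection $\psi:Q/IQ\surj \wt I/I$. On the other hand, viewing $\psi$ as a section of the locally free sheaf $(Q^*)_{A/I}$ on $\Spec(A/I)$ with zero scheme $\Spec(A/\wt I)$ of codimension $n$ in $\Spec(A/I)$, the standard formula for the top Chern class (see Fulton, as invoked via \cite{F}) gives $C^n(Q^*)\cap [A/I]=[A/\wt I]$ in $CH^{n+s}(A)$. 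Since $\eta^s(I)=[A/I]$, the two sides agree.

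The only delicate point is the second identity, where one must justify that the class of the zero scheme of the generic section $\psi$ equals the cap product $C^n(Q^*)\cap \eta^s(I)$. I would appeal to the fact that $\wt I/I$ is locally a regular sequence of length $n$ on $A/I$ (since $\height \wt I/I \geq n$ and both rings are Cohen--Macaulay in the relevant codimensions, as $A$ is regular) and that $\psi$ exhibits $A/\wt I$ as the zero scheme of a regular section of the rank $n$ bundle $Q^*$ on $\Spec(A/I)$; the identification $c_n(E)\cap [X]=[Z(s)]$ for a regular section $s$ of a rank $n$ bundle $E$ on $X$ then gives the claim. Finally, additivity over generators extends both identities from $G^s(A)$ (resp.\ $G_0^s(A)$) to $E^s(A)$ (resp.\ $E_0^s(A)$).
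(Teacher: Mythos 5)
Your treatment of the first identity is fine: both $w(Q)\cap(I,w)$ and $w_0(Q)\cap(I)$ may be computed from the same generic surjection $\psi:Q/IQ\surj \wt I/I$, both are independent of that choice, and $\zeta^{n+s}$ simply forgets the orientation $\gD$, so $w_0(Q)\zeta^s=\zeta^{n+s}w(Q)$ follows by checking on generators. (The paper itself gives no argument here; it defers entirely to \cite{SY}, Corollary 3.4.)

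The gap is in the second identity, at exactly the point you flag as delicate. You justify $C^n(Q^*)\cap[A/I]=[A/\wt I]$ by asserting that $\psi$ is a regular section of $Q^*$ on $\Spec(A/I)$ because ``both rings are Cohen--Macaulay in the relevant codimensions, as $A$ is regular''. Regularity of $A$ says nothing about the quotient $A/I$: the hypotheses only give that $I$ has height $s$ and $I/I^2$ is $s$-generated, and such a quotient of a regular ring need not be Cohen--Macaulay at the minimal primes of $\wt I$. Hence you cannot conclude that $\wt I/I$ is locally generated by a regular sequence on $A/I$, and the formula $c_n(E)\cap[X]=[Z(s)]$ for regular sections (or Fulton's Example 14.1.1, which requires $X$ Cohen--Macaulay) does not apply as stated. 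This is not cosmetic: at a prime $\mathfrak{q}$ of height $n+s$ containing $\wt I$ where $(A/I)_{\mathfrak{q}}$ is not Cohen--Macaulay, the contribution of the (localized) top Chern class is a Koszul/Hilbert--Samuel type multiplicity of the $n$ local generators of $\wt I/I$, which is in general strictly smaller than the length $\ell\bigl((A/\wt I)_{\mathfrak{q}}\bigr)$ entering the cycle $[A/\wt I]$; so the two sides of your equation can disagree at the level of multiplicities. You also cannot simply rechoose $\psi$ so that $\wt I$ avoids the non-Cohen--Macaulay locus of $A/I$, since that locus may have codimension smaller than $n$ and the moving lemma only avoids ideals of large height. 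So this step needs a genuine argument replacing the Cohen--Macaulay claim --- either quote the comparison of generic sections with the top Chern class as carried out in the proof of (\cite{SY}, Corollary 3.4), which is the proof the paper intends, or supply the multiplicity analysis yourself.
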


The following result is about vanishing of Whitney class
homomorphism. When $n+s=d$, it is proved in (\cite{SY}, Theorem 3.5)
for arbitrary projective module $Q$ and our proof is an adaptation
of \cite{SY}. We will follow the proof of (\ref{W}) with necessary
modifications.

\begin{theorem}\label{W2}
Let $A$ be a regular domain of dimension $d\geq 2$ containing an
infinite field. Suppose $Q$ is a stably free $A$-module of rank $n$
defined by $\sigma \in \Um_{r,n+r}(A)$. Let $s\geq 1$ be an integer
with $2n+s\geq d+3$. Write $F=A^n$ and $F'=A^s$. Let $I$ be an ideal
of height $s$ and let $w:F'/IF' \surj I/I^2$ be a surjection. If
$Q/IQ=P_0 \op A/I$, then $w(Q)\cap (I,w)=0$ in $E^{n+s}(A)$.

In particular, if $Q=P\op A$, then the homomorphism $w(Q):E^s(A) \ra
E^{n+s}(A)$ is identically zero. Similar statements hold for $w_0(Q)$.
\end{theorem}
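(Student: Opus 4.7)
The strategy is to mimic the construction of $w(Q) \cap (I, w)$ from Theorem \ref{W} and exploit the decomposition $Q/IQ = P_0 \op A/I$ to force the resulting local orientation $(\wt I, \gD) \in E^{n+s}(A)$ to be global.

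Following Theorem \ref{W}, I would lift $w$ to a surjection $\Omega : F' \surj I \cap K$ (with $K$ comaximal with $I$, invoking Lemma \ref{moving} to maintain comaximality throughout), choose a generic surjection $\psi : Q/IQ \surj \wt I/I$ with $\wt I$ of height $n+s$, construct a basis $\ol q_1, \ldots, \ol q_n$ of the free module $Q/\wt I Q$ by elementary completion of $\ol \sigma$, and form $\gD = \gb' \op w' : (F \op F')/\wt I (F \op F') \surj \wt I/\wt I^2$. The key observation is that since $I \subseteq \wt I$, the decomposition $Q/IQ = P_0 \op A/I$ descends by base change to $Q/\wt I Q = P_0/\wt I P_0 \op A/\wt I$, giving a canonical unimodular element $\ol e \in Q/\wt I Q$. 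Since $\dim(A/\wt I) \leq d - (n+s) \leq n - 3$, Bass--Suslin transitivity lets $E_n(A/\wt I)$ move $\ol q_n$ to $\ol e$; by the lemma preceding the definition of $w(Q)$ this corresponds to a different elementary completion of $\ol \sigma$, and we may replace our basis so that $\ol q_n = \ol e$.

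With $\ol q_n = \ol e$, the value $\gb(\ol e_n) = \wt \psi(\ol e)$ is the image of the unimodular section of $Q/IQ$ under $\psi$, which generates a free direct summand of $\wt I/(I + \wt I^2)$. The remaining data ($\gb'$ on $\ol e_1, \ldots, \ol e_{n-1}$ together with $\Omega$) then provides the other $n+s-1$ generators needed to globally lift $\gD$ to a surjection $F \op F' \surj \wt I$, via the subtraction-principle arguments of \cite{Bh-Raja-5} (Theorem 4.2) and Theorem \ref{zero}. This would show $(\wt I, \gD) = 0$ in $E^{n+s}(A)$, hence $w(Q) \cap (I, w) = 0$. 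The main obstacle is the bookkeeping for this global lift: one must ensure it represents $\gD$ as an equivalence class modulo $E_{n+s}(A/\wt I)$, not merely a surjection to $\wt I/\wt I^2$ differing from $\gD$ by some other element; this parallels the delicate well-definedness arguments already used in the construction of $e$ and of $w(Q)$.

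The ``in particular'' statement $Q = P \op A$ follows immediately, since then $Q/IQ = P/IP \op A/I$ for every ideal $I$ of height $s$, so $w(Q)$ vanishes on every generator of $G^s(A)$. The analog for $w_0(Q) : E_0^s(A) \to E_0^{n+s}(A)$ in the weak Euler class groups is obtained by the same construction, tracking only the underlying ideal: the argument above exhibits $\wt I$ as a surjective image of $F \op F'$, hence $(\wt I) = 0$ in $E_0^{n+s}(A)$.
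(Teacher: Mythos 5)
Your overall strategy is the right one: run the construction of $w(Q)\cap (I,w)$ and use the splitting $Q/IQ=P_0\op A/I$ to show that the resulting local orientation $(\wt I,\gD)$ is global. Your reduction in the middle paragraph is also harmless (replacing the canonical basis of $Q/\wt IQ$ by an $E_n(A/\wt I)$-translate changes $\gD$ only by an element of $E_{n+s}(A/\wt I)$, so one may indeed assume $\ol q_n$ is the unimodular element coming from the splitting). But the crucial step is exactly the one you wave at. The claim that $\wt \psi(\ol e)$ ``generates a free direct summand of $\wt I/(I+\wt I^2)$'' is unjustified (and not needed), and the assertion that the remaining data ``provides the other $n+s-1$ generators needed to globally lift $\gD$'' via the subtraction principle, (\cite{Bh-Raja-5}, Theorem 4.2) and Theorem \ref{zero} does not work: those results go in the opposite direction (from vanishing in the Euler class group to globality of the orientation), so invoking them to prove vanishing is circular, and an arbitrary lift of a surjection onto $\wt I/\wt I^2$ is in general not surjective onto $\wt I$ --- that failure is precisely what the Euler class group measures. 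The ``main obstacle'' you acknowledge at the end of your third paragraph is in fact the entire content of the theorem, and your sketch leaves it unresolved.

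For comparison, the paper produces the surjective lift by hand, and none of this appears in your argument. Using $Q/IQ=P_0\op A/I$ one writes $\psi=(\gt,\ol a)$ with $a\in \wt I$ and arranges $\psi(P_0)=\wt J/I$ with $\wt J$ of height $n+s-1$, so that $\wt I=(\wt J,a)$. Since $\dim A/\wt J\leq n-2$, the module $P_0/\wt JP_0$ is free, and the elementary completion of $\sigma$ is taken modulo $\wt J$ (not modulo $\wt I$), which lets one choose the lift $\gb'$ of $\gb$ in the form $(\ol{\zeta'},\ol a)$ with $\zeta'$ taking values in $\wt J/\wt J^2$. Lifting $\zeta'$ to $\gd:A^{n-1}\ra \wt J$, a direct check shows that the image $\wt {J'}$ of $(\gd,\Omega)$ satisfies $\wt J=\wt {J'}+\wt J^2$. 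Mohan Kumar's lemma (\cite{MK}) then gives $e\in \wt J^2$ with $\wt I=(\wt J,a)=(\wt {J'},b)$ for $b=e+(1-e)a$; since $b\equiv a$ modulo $\wt I^2$, the map $(\gd,b,\Omega):F\op F'\surj \wt I$ is a genuine surjection lifting $\gb'\op w'=\gD$, whence $(\wt I,\gD)=0$ in $E^{n+s}(A)$. This adjustment of the last coordinate via \cite{MK} is the key idea missing from your proposal; without it the claimed global lift simply is not exhibited. Your handling of the ``in particular'' statement and of $w_0(Q)$ is fine once the main claim is established.
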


\begin{proof}
{\bf Step 1.}
We can find an ideal $\wt I\subset A$ of height $n+s$ and a surjective
homomorphism $\psi:Q/IQ \surj \wt I/I$. Let $\wt \psi =\psi \ot A/\wt
I:Q/\wt I \surj \wt I/(I+\wt I^2)$.

Let $\Omega:F'\ra I$ be a lift of $w$ and let $\ol w=w\ot A/\wt I :
F'/\wt IF' \surj I/I\wt I$. Composing $\ol w$ with the natural map
$I/I\wt I \inj \wt I/I\wt I \surj \wt I/\wt I^2$, 
we get a map $w':F'/\wt IF' \ra \wt I/\wt I^2$.

Since $Q/IQ=P_0\op A/I$, we can write $\psi =(\theta,\ol a)$ for some
$a\in \wt I$ and $\theta \in P_0^*$. We may assume that $\psi(P_0)=\wt
J/I$, for some ideal $\wt J\subset A$ of height $n+s-1$. Note that
$\wt I=(\wt J,a)$.

Since $\dim A/\wt J=d-(n+s-1)\leq n-2$ and $P_0/IP_0$ is stably free
$A/I$-module of rank $n-1$, $P_0/\wt JP_0$ is free.
If ``prime'' denotes reduction modulo $\wt J$, then $\sigma'$ can be
completed to an elementary matrix in $E_{n+r}(A/\wt J)$. This
gives a canonical basis of $P_0/\wt JP_0$, say $q_1',\ldots,q_{n-1}'$. 
Let $\gamma' :(A/\wt J)^{n-1} \iso P_0/\wt JP_0$ be the isomorphism
given by $[q_1',\ldots,q_{n-1}']$.

Let $\gamma :F/\wt IF=(A/\wt I)^n \iso Q/\wt IQ =P_0/\wt IP_0 \op A/\wt
I$ be the isomorphism given by $(\gamma',1)$, i.e. $\gamma=[\ol
{q_1},\ldots,\ol {q_{n-1}},1]$. Let $\gb=\wt \psi \gamma : F/\wt IF
\surj \wt I/(I+\wt I^2)$ and let $\gb' : F/\wt IF \ra \wt I/\wt I^2$
be a lift of $\gb$.

As in the proof of (\ref{W}), combining $w'$ and $\gb'$, we get a
surjection $\gD=\gb'\op w' : (F\op F')/\wt I(F\op F') \surj \wt I/\wt
I^2$ and $(\wt I,\gD)=w(Q)\cap (I,w)$. We claim that $(\wt I,\gD)=0$ in
$E^{n+s}(A)$.\\

{\bf Step 2.} In this step, we will prove the claim.
The surjection $\gt:P_0 \surj \wt J/I$ induces a surjection $\ol \gt
=\gt \ot A/\wt J : P_0/\wt JP_0 \surj \wt J/(I+\wt J^2)$. Let
$\zeta=\ol \gt \gamma' : (A/\wt J)^{n-1} \surj \wt J/(I+\wt J^2)$ and
let $\zeta' : (A/\wt J)^{n-1} \ra \wt J/\wt J^2$ be a lift of $\zeta$.

If $\ol {\zeta'}$ denotes the composition of $\zeta'\ot A/\wt I :
(A/\wt I)^{n-1} \ra \wt J/\wt J\wt I$ with natural maps $\wt J/\wt
J\wt I\inj \wt I/\wt J\wt I \surj \wt I/\wt I^2$, we get that $(\ol
{\zeta'},\ol a)$ is a lift of $\gb:F/\wt IF \surj \wt I/(I+\wt
I^2)$. Since $w(Q)\cap (I,w)$ is independent of the lift $\gb'$ of
$\gb$, we may assume that $\gb'=(\ol {\zeta'},\ol a)$.

If $\gd: A^{n-1} \ra \wt J$ is a lift of $\zeta'$, then
$(\gd,a,\Omega):F\op F' \ra \wt I$ is a lift of $(\gb',w')$.  If $\wt
{J'}$ is the image of $(\gd,\Omega)$, then $\wt J=\wt {J'}+\wt
J^2$. (To see this, let $y\in \wt J$, then there exists $x\in A^{n-1}$
such that $\gd(x)-y=y_1+z$ for some $y_1\in I$ and $z\in \wt
J^2$. Choose $x_1\in F'$ such that $y_1-\Omega(x_1)=z_1 \in I^2
\subset \wt J^2.$ Therefore $\gd(x)-\Omega(x_1) =y$ modulo $\wt J^2$.)

Since $\wt J=\wt {J'}+\wt J^2$, we can find $e\in \wt J^2$ such that
$(1-e)\wt J \subset \wt {J'}$ and $\wt J=(\wt {J'},e)$. Therefore by
(\cite{MK}, Lemma 1), $\wt I=(\wt J,a)=(\wt {J'},b)$, where
$b=e+(1-e)a$. Thus $(\gd,b,\Omega) : F\op F' \surj \wt I$ is a
surjection which is a lift of $\gb'\op w'$. This proves that $(\wt
I,\gD)=0$ in $E^{n+s}(A)$. This completes the proof.
$\hfill \gj$
\end{proof}

%%%%%%%%%%%%%%%%%%%%%%%%%%%%%%%%%%%%%%%%%%%%%%
 
\subsection{Remark on some results of Yang}

We start this section by describing some results of Yang \cite{Y}.

$(1)$ Let $R$ be a Noetherian commutative ring of dimension $d$ and let $n$
be an integer with $2n\geq d+3$.  Let $l$ be an ideal of $R$ and let
$\rho:R\surj \ol R=R/l$ be the natural surjection. Yang \cite{Y}
defines a group homomorphism $E(\rho):E^n(R;R) \ra E^n(\ol R;\ol R)$, called
the restriction map of Euler class group, as $E(\rho)(I,w_I)=(\ol
{I'+l},w_{\ol{I'+l}})$, where $(I,w_I)=(I',w_{I'})$ in $E^n(R;R)$ with
height of $\ol {I'+l} \geq n$ in $\ol R$.

$(2)$ Further, let $A$ be a Noetherian commutative ring of dimension $s$
with $2n\geq s+3$. Assume there exists a ring homomorphism $\phi:R\ra
A$ such that for any local orientation $(I,w_I)\in E^n(R;R)$, height
of $\phi(I)$ is $\geq n$. Then Yang defines a group homomorphism
$E(\phi):E^n(R;R) \ra E^n(A;A)$, called the extension map of Euler class
group, as $E(\phi)(I,w_I)=(\phi(I),w_{\phi(I)})$.

$(3)$ Let $D(R,l)$ denotes the double of $R$ along $l$, then $$0\ra l\ra
D(R,l)\by {p_1} R\ra 0$$ is a split exact sequence.  The relative Euler
class group of $R$ and $l$ is defined as $$E^n(R,l;R)=
ker(E(p_1):E^n(D(R,l);D(R,l)) \ra E^n(R;R)),$$ where $E(p_1)$ is the
restriction map.

$(4)$ (Homology sequence) Let $p_2$ denote the second projection from
$D(R,l)\ra R$. Then the following Homology sequence of Euler class
group is exact.
$$E^n(R,l;R)\lby {E(p_2)} E^n(R;R) \lby {E(\rho)} E^n(R/l;R/l).$$ 

$(5)$ (Excision theorem) Further assume that there exists a splitting
of $\rho : R\ra R/l$ (i.e. a ring homomorphism $\gb:R/l\ra R$ such
that $\rho \gb=id$) satisfying the condition that for any local
orientation $(J,w_J)\in E^n(R/l;R/l)$, height of $\gb(J)$ is $\geq
n$. Then we have the following exact sequence, called the Excision
sequence of Euler class group.
$$0\ra E^n(R,l;R)\lby {E(p_2)} E^n(R;R) \lby {E(\rho)} E^n(R/l;R/l)\ra 0.$$

Note that the existence of a splitting of $\rho$ is sufficient for the
injectivity of Homology sequence.  

$(6)$ As a consequence of above
results, if $2n\geq d+4$, then we have the following split short exact
sequence:
$$0\ra E^n(R[T],(T);R[T])\lby {E(p_2)} E^n(R[T];R[T]) \lby {E(\rho)}
E^n(R;R)\ra 0.$$ Further if $R$ is a regular affine domain essentially
of finite type over an infinite perfect field, then it is proved that
$E(\rho):E^n(R[T];R[T])\ra E^n(R;R)$ is an isomorphism. Note that Das
and Raja (\cite{DR}, Theorem 3.8) also proved this isomorphism for
$2n\geq d+3$.

Using (\cite{Bh-Manoj}, 4.11, 5.7) and following the proof in \cite{Y}, we
get the following stronger results in case of polynomial ring over
$R$.

\begin{theorem}
Assume that $R$ is a regular domain of dimension $d$ containing an
infinite field and let $n$ be an integer with $2n\geq d+3$. Then we
have the following results:

$(i)$ (Homology sequence) Let $p_2$ denote the second projection from
$D(R[T],l)\ra R[T]$, where $l$ is an ideal of $R[T]$. Then we have the
following Homology exact sequence of Euler class group:
$$E^n(R[T],l;R[T])\lby {E(p_2)} E^n(R[T];R[T]) \lby {E(\rho)}
E^n(R[T]/l;R[T]/l).$$

$(ii)$ (Excision theorem) Further assume that there exists a splitting $\beta$
of $\rho : R[T]\ra R[T]/l$ 
satisfying the condition that for any local
orientation $(J,w_J)\in E^n(R[T]/l;R[T]/l)$, height of $\gb(J)$ is $\geq
n$. Then we have the following Excision exact
sequence of Euler class group:
$$0\ra E^n(R[T],l;R[T])\lby {E(p_2)} E^n(R[T];R[T]) \lby {E(\rho)}
E^n(R[T]/l;R[T]/l)\ra 0.$$

In particular, when $l=(T)$, then we have the following split short
exact sequence:
$$0\ra E^n(R[T],(T);R[T])\lby {E(p_2)} E^n(R[T];R[T]) \lby {E(\rho)}
E^n(R;R)\ra 0.$$

\end{theorem}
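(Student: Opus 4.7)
The plan is to reproduce Yang's proof from \cite{Y} essentially verbatim, the only modification being that at each point where Yang invokes moving lemmas, subtraction principles, or generic surjection results (which in \cite{Y} force the hypothesis $2n \geq \dim + 3$ over a general Noetherian ring, giving $2n \geq d+4$ for $R[T]$), we substitute the sharper results from \cite{Bh-Manoj} (4.11 and 5.7) and Theorem \ref{zero} above, which are valid under the weaker hypothesis $2n \geq d+3$ over regular rings containing an infinite field. Since $\dim R[T] = d+1$, the net effect is to save exactly one dimension over Yang's hypothesis.

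For part (i), the composition $E(\rho) E(p_2) = 0$ is formal: in the double $D(R[T], l)$ the two projections $p_1, p_2: D(R[T], l) \to R[T]$ agree after reduction modulo $l$, so $\rho p_2 = \rho p_1$, and any element of $E^n(R[T], l; R[T]) = \ker E(p_1)$ therefore lies in $\ker(E(\rho) E(p_2))$. For the reverse containment, suppose $(I, w_I) \in E^n(R[T]; R[T])$ satisfies $E(\rho)(I, w_I) = 0$. Using Lemma \ref{moving} I would first replace $I$ within its equivalence class by an ideal of height $n$ comaximal with an auxiliary ideal chosen to control the interaction with $l$; then Theorem \ref{zero}, applied in $R[T]/l$, shows that the induced local orientation modulo $l$ lifts to a surjection from a free module. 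Patching this lift with a chosen surjective lift of $w_I$ produces a local orientation on $I$ viewed inside $D(R[T], l)$, giving an element of $E^n(D(R[T], l); D(R[T], l))$ that lies in $\ker E(p_1)$ and maps to $(I, w_I)$ under $E(p_2)$.

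Part (ii) adds the surjectivity of $E(\rho)$ and the resulting injectivity of $E(p_2)$. Surjectivity follows directly from the height hypothesis on $\gb$: the pair $(\gb(J), w_{\gb(J)})$ is a valid element of $E^n(R[T]; R[T])$ lifting any $(J, w_J) \in E^n(R[T]/l; R[T]/l)$, since $\rho \gb = \mathrm{id}$ implies $E(\rho) E(\gb) = \mathrm{id}$, and $E(\gb)$ is therefore a section of $E(\rho)$. Injectivity of $E(p_2)$ then follows formally from Yang's fiber-product argument in \cite{Y}. The case $l = (T)$ needs no extra hypothesis: the inclusion $R \inj R[T]$ is a splitting of $\rho: R[T] \surj R$ that automatically preserves heights of ideals, so the excision sequence of part (ii) specializes to the claimed split short exact sequence.

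The main obstacle is the translation itself: verifying at each step of Yang's argument that the strengthened inputs from \cite{Bh-Manoj} and Theorem \ref{zero} above carry exactly the hypotheses Yang actually needs and produce exactly the conclusions he requires. The borderline case $2n = d+3$ is where this verification is most delicate, since moves on ideals must be carried out with no dimensional slack; however, the results of \cite{Bh-Manoj} are designed precisely for this range, so the substitution is mechanical once catalogued, and no genuinely new ideas beyond Yang's are required.
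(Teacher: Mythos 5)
Your proposal is correct and is essentially identical to the paper's own treatment: the paper offers no independent argument, but simply states that the theorem follows by running Yang's proof from \cite{Y} verbatim while substituting the sharper polynomial-ring results (\cite{Bh-Manoj}, 4.11, 5.7), valid for $2n\geq d+3$ over a regular domain containing an infinite field, which is exactly your substitution strategy (your additional sketch of exactness and of the splitting via $\gb$, including the $l=(T)$ case, is consistent with Yang's argument). No further comment is needed.
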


%#####################################################################
%$$$$$$$$$$$$$$$$$$$$$$$$$$$$$$$$$$$$$$$$$$$$$$$$$$$$$$$$$$$$$$$$$

{}


\begin{thebibliography}{}
{\small 

\bibitem{B}{} H. Bass, {\it K-Theory and stable algebra},
I.H.E.S. {\bf 22} (1964), 5-60.

\bibitem{Bh-Manoj}{} S. M. Bhatwadekar and M. K. Keshari, {\it A
question of Nori: projective generation of ideals}, 
K-Theory {\bf 28} (2003), 329-351. 

\bibitem{Bh-Raja-1}{} S. M. Bhatwadekar and Raja Sridharan, {\it
Projective generation of curves in polynomial extensions of an affine
domain and a question of Nori}, Invent. Math. {\bf 133} (1998),
161-192.

\bibitem{Bh-Raja-5}{}  S. M. Bhatwadekar and Raja Sridharan, {\it On
Euler classes and stably free projective modules}, Proceedings of the
international colloquium on Algebra, Arithmetic and Geometry, Mumbai
2000, Narosa Publishing House,  139-158.

\bibitem{Mrinal}{} M.K. Das, {\it The Euler
class groups of a polynomial algebras}, J. Algebra {\bf 264} (2003), 582-612.

\bibitem{DR}{} M.K. Das and Raja Sridharan, {\it Euler class groups
and a theorem of Roitman}, Preprint.

\bibitem{F}{} W. Fulton, {\it Intersection theory}, Second Ed.,
Springer Verlag, 1998.

\bibitem{SY}{} S. Mandal and Y. Yang, {\it Intersection theory of
algebraic obstructions}, To appear in J. Pure and Applied Algebra.

\bibitem{MK}{} N. Mohan Kumar, {\it Complete intersections},
J. MAth. Kyoto Univ. {\bf 17} (1977), 533-538.

\bibitem{vas}{} L.N. Vaserstein, {\it On the stabilisation of the
general linear group over a ring}, Math. USSR. Sbornik {\bf 8} (1969),
(English Translation) 383-400.

\bibitem{Y}{} Yong Yang, {\it Homology sequence and Excision theorem
for Euler class group}, Preprint.

}
\end{thebibliography}
\end{document}